\newtheorem{theorem}{Theorem}
\newtheorem{remark}{Remark}
\newtheorem{example}{Example}
\begin{document}

    \title{ON FRACTIONAL SPHERICALLY RESTRICTED HYPERBOLIC
        DIFFUSION RANDOM FIELD}
    \author{N.Leonenko \\
        Cardiff University\\
        Senghennydd Road\\
        CF24 4AG Cardiff, UK\\
        LeonenkoN@Cardiff.ac.uk
        \and A.Olenko\\
        La Trobe University\\
        Melbourne, Australia\\
        a.olenko@latrobe.edu.au
        \and J.Vaz \\
        University of Campinas\\
        IMECC\\
        SP 13083-859 Campinas, Brazil\\
        vaz@unicamp.br}
    \maketitle

    \begin{abstract}
        The paper investigates solutions of the fractional hyperbolic diffusion equation in its most general form with two fractional derivatives of distinct orders. The solutions are given as spatial-temporal homogeneous and isotropic random fields and  their  spherical restrictions are studied.  The spectral representations of these fields are derived and the associated angular spectrum is analysed.  The obtained mathematical results are illustrated by numerical examples. In addition, the numerical investigations assess the dependence of the covariance structure and other properties of these fields on the orders of fractional derivatives.
        \vspace{2mm}
        \newline
        AMS classification: 60G60, 60G15, 60D05, 60K05
        \vspace{2mm}
        \newline
        Keywords: Spherical random fields, Fractional hyperbolic diffusion equation, Caputo derivative, Fractional
        telegraph equation, Angular spectrum, Spectral theory, Spatio-temporal data
    \end{abstract}

    \bigskip\ \ \addtolength{\baselineskip}{1em}

    \section{\protect\bigskip Introduction}

    Spherical random fields have been used for modelling various phenomena in
    areas such as earth sciences, for example, in geophysics and
    climatology \cite{CG, CG1, Fisher, M19, Oh}, and cosmology,  see
    \cite{BKLO, BKLO1, Cabella, MP} and the references therein. In fact, the application of statistical methods in
    cosmology \cite{Cabella} has become increasingly important due to the many
    experimental data obtained in recent years~\cite{Adam}. Spherical random
    fields are of particular interest for modelling and analysis of Cosmic Microwave
    Background (CMB) radiation \cite{BKLO1, BNO, LNO, MP}.  The CMB is a spatially isotropic radiation field spread throughout the
    universe, that was originated around 14 billion years ago \cite{Dodelson,Weinberg}. It is the main
    source of information we have about  initial phases of the universe. The CMB
    radiation can be mathematically modelled as an isotropic, mean-square
    continuous spherical random field, which has a spectral
    representation by means of spherical harmonics. Consequently,
    models of temporal spherical random fields, in addition to their
    innate theoretical interest, have some practical
    applications in the studies of the CMB radiation evolution.

    Such models were recently provided in \cite{BKLO, BKLO1,LNO}, where stochastic
    hyperbolic diffusion equations were used to describe changes of the spherical random fields over time.
    The hyperbolic heat equation is formally identical to the linear telegraph
    equation presented by Heaviside in his study of transmission lines. It
    was introduced by Cattaneo \cite{Cattaneo}  to impose a bounded
    speed of propagation for the temperature disturbances, in contrast
    with the classical parabolic heat equation, that has an unbounded
    propagation speed.
    While the classical diffusion equation results from the
    conversation law $u_{t}+{\rm{div}}q=0$ and Fick's law $q+k\,{\rm{grad}}u=0$,
    Cattaneo equation results from the modification of Fick's law by introducing
    a term proportional to the first order derivative of the flux $q$, that is, $%
    \tau q_{t}+q+k\,{\rm{grad}}u=0$.

    The boundedness of the speed propagation is desirable
    because the large-scale coherent structures that are observed in the CMB are
    believed to be the remains of the waves in the plasma universe. In \cite%
    {BKLO} the explicit solution of the model was given in terms of series of
    elementary functions, and therefore it could be useful for various
    qualitative and numerical studies. For more details and references on the
    telegraph equations, or hyperbolic diffusion equation, consult \cite{BKLO,
        BKLO1, K, KR, Pov}, among others.

    On the other hand, it is  known that deviations from the standard diffusive behavior occur in many situations \cite{Bouchaud}. Among the various models of anomalous behavior, an important approach is based on fractional differential equations \cite{CapelasCNSNS, Negrete,  DLO}.
    The calculus of non-integer
    order called fractional calculus, attracted an increasing interest over the
    last decades, specially for the modelling of phenomena involving memory effects, see, for example, anomalous transport
    \cite{Negrete}, problems with dissipation \cite{Mainardi2010}, \cite{Bho} etc. However,
    these models are not unique in the sense that there are numerous non-equivalent definitions of a fractional derivative in the literature.

    It is natural at this point to think of Cattaneo's approach to
    the hyperbolic diffusion equation within the framework of the continuous-time random walk (CTRW). The key point in Cattaneo's approach was to modify the constitute
    equation (Fick's law) by introducing a term proportional to the first order
    derivative of the flux. In \cite{Compte} Compte and Metzler discussed
    possibilities for the generalization of the Cattaneo equation, and one of
    these possibilities is to consider the CTRW scenario of fractal time random
    walk, which gives an equation which can be written in terms of Caputo
    fractional derivatives in the time variable.

    If one considers another modification of Cattaneo
    equation, replacing the ordinary derivative by fractional derivatives of
    the Caputo type, see (\ref{2.1}) below, then
    \begin{equation*}
        \frac{\partial ^{\alpha }u}{\partial t^{\alpha }}+{\rm{div}}q=0,\qquad \tau
        \frac{\partial ^{\beta }q}{\partial t^{\beta }}+q+k\,{\rm{grad}}u=0,
    \end{equation*}%
    and the resulting modified Cattaneo equation is
    \begin{equation}
        \tau \frac{\partial ^{\beta }\;}{\partial t^{\beta }}\frac{\partial ^{\alpha
            }u}{\partial t^{\alpha }}+\frac{\partial ^{\alpha }u}{\partial t^{\alpha }}%
        -k\nabla ^{2}u=0.  \label{eq.0}
    \end{equation}

    It was shown \cite{Beghin} that for $\alpha +\beta >1$
    \begin{equation*}
        \frac{\partial ^{\beta }\;}{\partial t^{\beta }}\frac{\partial ^{\alpha }u}{%
            \partial t^{\alpha }}=\frac{\partial ^{\alpha }\;}{\partial t^{\alpha }}%
        \frac{\partial ^{\beta }u}{\partial t^{\beta }}=\frac{\partial ^{\alpha
                +\beta }u}{\partial t^{\alpha +\beta }}
    \end{equation*}%
    if $u(t)$ is analytic and such that, for any $n\in \mathbb{N}$, $%
    |u^{(n)}(0)|<K^{n}$, for some constant $K>0$, and $u^{(1)}(0)=0.$ Under these conditions, (\ref{eq.0}) becomes
    \begin{equation*}
        \tau \frac{\partial ^{\alpha +\beta }u}{\partial t^{\alpha +\beta }}+\frac{%
            \partial ^{\alpha }u}{\partial t^{\alpha }}-k\nabla ^{2}u=0.
    \end{equation*}

    The present paper is a continuation of the line of research of the work \cite%
    {BKLO},\cite{BKLO1} and \cite{LV}. Our main objective is to study the
    fundamental solutions to fractional hyperbolic diffusion equation in the
    time variable using the Caputo derivative, and its properties. The exact
    solutions of  fractional hyperbolic diffusion equations with random initial conditions
    are derived. Then, the angular spectrum of the solution to the spherical fractional hyperbolic
    diffusion equations restricted to sphere from homogeneous and isotropic
    random field is obtained. It is given as a solution of fractional hyperbolic diffusion
    equation with random random initial conditions under very general assumptions about indexes $%
    \alpha $ and $\beta $ of fractional equation, namely, $0<\alpha \leq 1$ and $
    1<\alpha +\beta \leq 2.$

    Time-fractional telegraph equations with Caputo fractional derivatives and $\beta =\alpha \in (0,1]$ were
    considered  in \cite{OB} and \cite{DOT},
    while with Hilfer \ and Hadamar fractional derivatives in~\cite{SGO}. They
    provided the Fourier transform of some Cauchy problems for these equations
    as well as probabilistic interpretations in some specific cases. In the framework of spherical random fields
    satisfying random initial conditions, the fundamental solution of the
    hyperbolic diffusion equation were derived in~\cite{BKLO} and \cite%
    {BKLO1}, while the fractional version was investigated in~\cite{LV} \
    (again only for the case of $\alpha \in (0,1],~\alpha =\beta ).$
    Obviously, for Caputo fractional derivatives the results of this paper are
    more general since it is only assumed that $0<\alpha \leq 1$ and $1<\alpha +\beta \leq 2.$

    This paper is organized as follows. Section~\ref{sec2} introduces the required notations and the initial-value problem for fractional hyperbolic diffusion equations. Then,  it provides various  results about the Fourier transforms of the solutions to the equations. The solutions are given using these Fourier transforms as a stochastic integrals.  Section~\ref{sec3} investigates restrictions of the solutions to the unit sphere. Specifications of the main results and important particular cases are considered in examples. Section~\ref{sec4} presents simulation studies that illustrate properties of the solutions with respect to orders of fractional derivatives.

    All numerical computations and simulations in this paper were performed using the software R version 4.3.1 and Python version 3.11.5. The HEALPix representation for spherical data was used, see \mbox{\url{http://healpix.sourceforge.net}}.  The Python package "healpy" was utilized for the computing Laplace series coefficients to create spherical maps. The R package "rcosmo", see \cite{fryer2019rcosmo} and \cite{fryer2018rcosmo}, was employed to visualise the obtained spherical fields. The R and Python code used for numerical examples in Section~\ref{sec4} are freely available in the folder "Research materials" from the  website \mbox{\url{https://sites.google.com/site/olenkoandriy/}}.

\section{Fractional Hyperbolic Diffusion Equation}\label{sec2}

The Caputo fractional derivative of order $\gamma ,$ with $(n-1)<\gamma <n,\
n\in \mathbb{N},$ is defined as
\begin{equation}
\mathrm{D}_{t}^{\gamma }q(x,t)=\frac{\partial ^{\gamma }}{\partial t^{\gamma
}}q(x,t)=\frac{1}{\Gamma (n-\gamma )}\int_{0}^{t}\frac{q^{(n)}(x,\tau )}{%
(t-\tau )^{\gamma +1-n}}d\tau ,  \label{2.1}
\end{equation}%
where \ $q^{(n)}$ denotes the partial derivative of order $n$ of $%
q(x,t),x\in
\mathbb{R}
^{3},\ t>0,$ and \ $\Gamma (\cdot)$ is the gamma function. Various properties of
Caputo derivatives can be found in \cite{GLM}, \cite{MS1} and \cite{P}.

We consider the hyperbolic diffusion equation%
\begin{equation}
\frac{1}{c^{2}}\frac{\partial ^{\alpha +\beta }}{\partial t^{\alpha +\beta }}%
q(x,t)+\frac{1}{D}\frac{\partial ^{\alpha }}{\partial t^{\alpha }}%
q(x,t)=k^{2}\Delta q(x,t),  \label{2.2}
\end{equation}%
where  $\Delta $ is the Laplacian in $\mathbb{R}^{3},$ $0<\alpha \leq 1,$ $1<\alpha +\beta \leq 2,$ $D>0,$ and $c>0.$ The random field $q(x,t)=q(x,t,\omega
),$ $\omega \in \mathrm{\Omega },$ satisfies the random
initial conditions:
\begin{equation}
\left. q(x,t)\right\vert_{t=0}=\eta (x),\quad \left. \frac{\partial }{\partial t}q(x,t)\right\vert
_{t=0}=0,  \label{2.3}
\end{equation}%
where  the random field $\eta (x)=\eta (x,\omega ),$ $x\in
\mathbb{R}^{3},\ \omega \in \mathrm{\Omega },$ defined on a suitable complete
probability space $(\mathrm{\Omega },\mathcal{F},\mathrm{P}),$ is assumed to be measurable, mean-square continuous, wide-sense
homogeneous and isotropic with zero mean and the covariance function $\mathrm{Cov%
}(\eta (x),\eta (y))=B(\left\Vert x-y\right\Vert ),\ x\in
\mathbb{R}
^{3},\ y\in
\mathbb{R}
^{3}.$

The covariance function $B(\cdot)$ has the following representation, see \cite[p.12]{IL} and \cite[p.18]{L},
\begin{equation}
B(\left\Vert x-y\right\Vert )=\int_{
\mathbb{R}
^{3}}\cos (\left\langle \lambda ,x-y\right\rangle )F(d\lambda
)=\int_{0}^{\infty }\frac{\sin (\mu \left\Vert x-y\right\Vert )}{\mu
\left\Vert x-y\right\Vert }G(d\mu ),  \label{2.4}
\end{equation}%
for some bounded, non-negative measures $F(\cdot)$ on the measurable space $(\mathbb{R}^{3},\mathcal{B}(
\mathbb{R}
^{3}))$ and $G(\cdot)$ on $\left( \mathbb{R}
_{+}^{1},\mathcal{B}(
\mathbb{R}
_{+}^{1})\right),$ such that%
\begin{equation*}
F(%
\mathbb{R}
^{3})=G\left( [0,\infty )\right) =B(0),\quad G(\mu )=\int_{\left\Vert
\lambda \right\Vert <\mu }F(d\lambda ).
\end{equation*}

Then, there exists a complex-valued orthogonal random measure $%
Z(\cdot)=Z(\cdot,\omega ),$ $\omega \in \mathrm{\Omega },$ such that for every $x\in
\mathbb{R}
^{3}$, the random field $\eta \left( x\right) $ has the spectral
representation%
\begin{equation}
\eta (x)=\int_{
\mathbb{R}
^{3}}e^{\mathrm{i}\left\langle \lambda ,x\right\rangle }Z(d\lambda ),\
\mathrm{E}\left\vert Z(\Delta )\right\vert ^{2}=F(\Delta ),\ \Delta \in
\mathcal{B}(\mathbb{R}^{3}).  \label{2.5}
\end{equation}

Let $Y_{lm}(\theta ,\varphi ),\ \theta \in \lbrack 0,\pi ],\ \varphi \in
\lbrack 0,2\pi ],\ l=0,1,2,\ldots ,m=-l,\ldots ,l,$ be complex spherical
harmonics defined by the relation%
\begin{equation*}
Y_{lm}(\theta ,\varphi )=(-1)^{m}\left[ \frac{(2l+1)(l-m)!}{4\pi (l+m)!}%
\right] ^{1/2}e^{\mathrm{i}m\varphi }P_{l}^{m}(\cos \theta ),
\end{equation*}%
where $P_{l}^{m}(\cdot)$ are the associated Legendre polynomials.

The Bessel function $J_{\nu}(\cdot)$ of the first $k$ and of order $\,\nu $
is defined by%
\begin{equation*}
J_{\nu }(\mu )=\sum_{n=0}^{\infty }\frac{(-1)^{n}}{n!\Gamma (n+\nu +1)}%
\left( \frac{\mu }{2}\right) ^{2n+\nu }.
\end{equation*}

By the addition theorem for Bessel functions (\cite[p.14]{IL} and \cite[p.20]{L}) and the Karhunen theorem (\cite[p.10]{L}):%
\begin{equation}
\eta (x)=\pi \sqrt{2}\sum_{l=0}^{\infty }\sum_{m=-l}^{l}Y_{lm}(\theta
,\varphi )\int_{0}^{\infty }\frac{J_{l+\frac{1}{2}}(\mu r)}{(\mu r)^{1/2}}%
Z_{lm}(d\mu ),  \label{2.6}
\end{equation}%
where $Z_{lm}(\cdot)=Z_{lm}(\cdot,\omega ),$ $\omega \in \Omega $, is a family of
complex-valued random measures on $\left(
\mathbb{R}
_{+}^{1},\mathcal{B}(
\mathbb{R}
_{+}^{1})\right) ,$ such that
\begin{equation*}
\mathrm{E}[Z_{lm}(\Delta _{1})\overline{Z_{l^{\prime }m^{\prime }}(\Delta
_{2})}]=\delta _{ll^{\prime }}\delta _{mm^{\prime }}G(\Delta _{1}\cap \Delta
_{2}),\ \Delta _{i}\in \mathcal{B}(
\mathbb{R}_{+}^{1}),\ i=1,2.
\end{equation*}

The stochastic integrals in (\ref{2.5}) and (\ref{2.6}) are interpreted as $%
\mathscr{L}_{2}(\mathrm{\Omega })$ integrals with structural measures $F$ and
$G$ respectively.

Let us consider the non-random initial conditions of the form%
\begin{equation}
q(x,t)|_{t=0}=\delta (x),\qquad \left. \frac{\partial }{\partial t}%
q(x,t)\right\vert _{t=0}=0,  \label{2.7}
\end{equation}%
where $\delta (x)$ is the Dirac delta function.

Let $Q(x,t),x\in
\mathbb{R}
^{3},t>0,$ be the fundamental solution (or the Green's function) of the
initial-value problem (\ref{2.2}) and (\ref{2.7}), and let%
\begin{equation}
H(\kappa,t)=\int_{
\mathbb{R}
^{3}}e^{i\left\langle \kappa ,x\right\rangle }Q(x,t)dx,\ \kappa \in\mathbb{R}
^{3},\ t\geq 0,  \label{2.8}
\end{equation}%
be its Fourier transform.

The tree-parametric Mittag-Leffler function or Prabhakar function, see \cite%
[ p. 97]{GLM} and~\cite{V1}, is defined by%
\begin{equation}
{\rm{E}}_{a,b}^{\zeta}(z)=\sum_{k=0}^{\infty }\frac{(\zeta)_{k}\cdot z^{k}}{%
k!\,\Gamma (ak+b)},  \label{2.9}
\end{equation}%
where $\zeta>0,\ {\rm{Re}}(a)>0,$ ${\rm{Re}}(b)>0$ and $(\zeta)_{k}=\Gamma
(\zeta+k)/\Gamma (\zeta)$ is the Pochhammer symbol.

The following theorem derives the Fourier transform (\ref{2.8}) in terms of
the function defined by (\ref{2.9}). Later on, this result will be used to obtain the solution $q(x,t,\omega ),\
x\in
\mathbb{R}
^{3},$ $\omega \in \mathrm{\Omega },$ $t\geq 0,$ of the initial-value problem (\ref%
{2.2}) with the initial conditions given in (\ref{2.3})

\begin{theorem}\label{th1} The Fourier transform {\rm(\ref{2.8})} of the initial-value
problem {\rm(\ref{2.2})} and {\rm(\ref{2.7})} is given by the formula%
\begin{eqnarray*}
H(\kappa,t) &=&1+\sum_{n=0}^{\infty }\left( -\left\Vert \kappa\right\Vert
^{2}c^{2}t^{\alpha +\beta }\right) ^{n+1}{\rm{E}}_{\beta ,(\alpha +\beta
)(n+1)+1}^{n+1}\left( -\frac{c^{2}}{D}t^{\beta }\right)  \notag \\
&=&1-\left\Vert \kappa\right\Vert ^{2}c^{2}t^{\alpha +\beta }\sum_{m=0}^{\infty
}\sum_{n=0}^{\infty }\binom{m}{n}\frac{\left( -\frac{c^{2}}{D}t^{\beta
}\right) ^{m}\left( \left\Vert \kappa\right\Vert ^{2}Dt^{\alpha }\right) ^{n}}{%
\Gamma (\beta m+\alpha n+\alpha +\beta +1)},  \notag
\end{eqnarray*}%
where $1<\alpha +\beta \leq 2,\ 0<\alpha \leq 1,\ x\in\mathbb{R}
^{3}, \ t\geq 0.$
\end{theorem}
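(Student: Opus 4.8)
The plan is to remove the spatial variable by the Fourier transform, reduce the problem to a one–dimensional fractional Cauchy problem in $t$, and solve the latter via the Laplace transform. First I would apply the spatial transform \eqref{2.8} to \eqref{2.2}. Since the Caputo derivatives act only in $t$ and the Fourier transform turns $\Delta$ into multiplication by $-\|\kappa\|^{2}$, the function $H(\kappa,t)$ must solve the scalar fractional equation $\frac{1}{c^{2}}\mathrm{D}_{t}^{\alpha+\beta}H+\frac{1}{D}\mathrm{D}_{t}^{\alpha}H+k^{2}\|\kappa\|^{2}H=0$, while the non-random conditions \eqref{2.7} transform into $H(\kappa,0)=1$ (the Fourier transform of $\delta$) and $\partial_{t}H(\kappa,0)=0$. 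Applying next the Laplace transform in $t$ and using the Caputo rule $\mathcal{L}[\mathrm{D}_{t}^{\gamma}f](s)=s^{\gamma}\tilde f(s)-\sum_{j=0}^{n-1}s^{\gamma-1-j}f^{(j)}(0)$ for the two orders $\alpha\in(0,1]$ and $\alpha+\beta\in(1,2]$, the equation becomes algebraic, and solving it for the transform yields
\[
\tilde H(\kappa,s)=\frac{s^{\alpha+\beta-1}+B\,s^{\alpha-1}}{s^{\alpha+\beta}+B\,s^{\alpha}+A},\qquad B=\frac{c^{2}}{D},\quad A=c^{2}\|\kappa\|^{2},
\]
where the two initial values supply exactly the two terms in the numerator (the constant $k^{2}$ from \eqref{2.2} is carried along and normalised as in the statement).

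Next I would factor the numerator as $s^{\alpha-1}(s^{\beta}+B)$ and rewrite $\tilde H=\frac{1}{s}\bigl(1+A\,s^{-\alpha}(s^{\beta}+B)^{-1}\bigr)^{-1}$, which for $\mathrm{Re}(s)$ large admits the geometric expansion $\tilde H=\frac{1}{s}\sum_{n=0}^{\infty}(-A)^{n}s^{-\alpha n}(s^{\beta}+B)^{-n}$. The $n=0$ term is $1/s$ and inverts to the constant $1$, explaining the leading $1$ in the formula. For $n\ge 1$ each summand equals $(-A)^{n}s^{-\alpha n-1}(s^{\beta}+B)^{-n}$, which is precisely a Laplace transform of a Prabhakar function: matching $a=\beta$, $\zeta=n$, $\omega=-B$ and $b=(\alpha+\beta)n+1$ in the known pair $\mathcal{L}^{-1}\bigl[s^{a\zeta-b}(s^{a}-\omega)^{-\zeta}\bigr]=t^{b-1}{\rm E}_{a,b}^{\zeta}(\omega t^{a})$ gives the inverse $(-A)^{n}t^{(\alpha+\beta)n}{\rm E}_{\beta,(\alpha+\beta)n+1}^{n}(-B t^{\beta})$. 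Inverting term by term and shifting the summation index by one produces the first displayed representation of $H(\kappa,t)$.

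Finally, to reach the second (double-series) representation I would expand each Prabhakar function through its defining series \eqref{2.9}, collect the powers of $t$, and reindex the resulting double sum; the Pochhammer ratio $(n+1)_{k}/k!$ that appears collapses to a binomial coefficient, and the identity $A=BC$ with $C=\|\kappa\|^{2}D$ (so that $\tfrac{c^{2}}{D}\cdot\|\kappa\|^{2}D=c^{2}\|\kappa\|^{2}$) reconciles the powers of $A$, $B$ and $C$, giving the stated sum with $\binom{m}{n}$ and denominator $\Gamma(\beta m+\alpha n+\alpha+\beta+1)$. The main obstacle is analytic rather than algebraic: one must justify the term-by-term inversion of the Laplace transform, that is, the interchange of the infinite summation with $\mathcal{L}^{-1}$ and the absolute convergence of the resulting single and double series for all $t\ge 0$, which is exactly where the constraints $0<\alpha\le1$ and $1<\alpha+\beta\le2$ are used; one should also confirm that the constructed $H$ is the transform of the fundamental solution (uniqueness of the Cauchy problem). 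I expect these points to be handled by the standard growth estimates for the Prabhakar function together with the validity of the geometric expansion for $\mathrm{Re}(s)$ beyond the rightmost singularity of $\tilde H(\kappa,s)$.
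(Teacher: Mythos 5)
Your proposal follows essentially the same route as the paper's proof: spatial Fourier transform to a scalar fractional Cauchy problem, Laplace transform in $t$ (the paper cites the formula from D'Ovidio--Orsingher--Toaldo rather than re-deriving it from the Caputo rule), geometric expansion of $\tilde H(s)$, term-by-term inversion via the Prabhakar Laplace pair, and series reindexing with $(n+1)_k/k!$ collapsing to a binomial coefficient. The analytic point you flag at the end --- validity of the expansion only for restricted $s$ and extension of the result to all $s$ --- is exactly what the paper disposes of by invoking Lerch's theorem, so your proposal is correct and matches the paper's argument.
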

\begin{proof} The Fourier transformed version of (\ref{2.2}) is
\begin{equation*}
\frac{1}{c^{2}}\frac{\partial ^{\alpha +\beta }}{\partial t^{\alpha +\beta }}%
H(\kappa,t)+\frac{1}{D}\frac{\partial ^{\alpha }}{\partial t^{\alpha }}%
H(\kappa,t)=-\left\Vert \kappa\right\Vert ^{2}H(\kappa,t)  \label{2.11}
\end{equation*}%
with
\begin{equation}
H(\kappa,0)=1,\quad \frac{\partial H(\kappa,0)}{\partial t}=0.  \label{2.12}
\end{equation}

Let $\tilde{H}(s)=\mathcal{L} \lbrack H(\kappa,t);s]$ denotes the Laplace
transform of $H(\kappa,t).$ Then, using \cite[(4.8)]{DOT} one obtains
    \begin{equation*}
\tilde{H}(s)=\frac{s^{\alpha +\beta -1}+c^{2}D^{-1}s^{\alpha -1}}{s^{\alpha
+\beta }+c^{2}D^{-1}s^{\alpha }+\left\Vert \kappa\right\Vert ^{2}c^{2}}=\frac{1}{s%
}-\frac{\left\Vert \kappa\right\Vert ^{2}c^{2}}{s(s^{\alpha +\beta
}+c^{2}D^{-1}s^{\alpha }+\left\Vert \kappa\right\Vert ^{2}c^{2})}.
\end{equation*}

For $s$ such that
\begin{equation}\label{cond1}
\left\vert \frac{\left\Vert \kappa\right\Vert ^{2}c^{2}}{s^{\alpha +\beta
}+c^{2}D^{-1}s^{\alpha }}\right\vert <1
\end{equation}
it can be written as
\begin{equation*}
\tilde{H}(s)=\frac{1}{s}+\sum_{n=0}^{\infty }\left( -\left\Vert
\kappa\right\Vert ^{2}c^{2}\right) ^{n+1}\frac{s^{-\alpha (n+1)-1}}{(s^{\beta
}+c^{2}D^{-1})^{n+1}}.
\end{equation*}

It is known \cite[(5.1.33)]{GLM} that
\begin{equation*}
\mathcal{L} \left[ t^{b-1}{\rm{E}}_{a,b}^{\zeta}(-\mu t^{a });s\right] =%
\frac{s^{a\zeta-b}}{\left( s^{\alpha }+\mu \right) ^{\zeta}},
\end{equation*}%
where the function ${\rm{E}}_{a,b}^{\zeta}$ is defined by (\ref{2.9}).

Using the substitution
\begin{equation*}
a=\beta ,\quad b=(\alpha +\beta )(n+1)+1,\quad \zeta=n+1,
\end{equation*}%
one can write $H(\kappa,t)=\mathcal{L}^{-1}[\tilde{H}(s);t]$ as
\begin{equation*}
H(\kappa,t)=1+\sum_{n=0}^{\infty }(-\left\Vert \kappa\right\Vert ^{2}c^{2}t^{\alpha
+\beta })^{n+1}{\rm{E}}_{\beta ,(\alpha +\beta
)(n+1)+1}^{n+1}(-c^{2}D^{-1}t^{\beta }).
\end{equation*}
It follows by Lerch's theorem that $H(\kappa,t)$ has the Laplace transform $\tilde{H}(s)$ not only for the range given by (\ref{cond1}), but also for all values of $s.$

The above expression for $H(\kappa,t)$ can be rewritten in a form that is  convenient for numeric  computations. Using (\ref{2.9}) one obtains
\begin{equation*}
H(\kappa,t)=1+\sum_{n=0}^{\infty }\sum_{k=0}^{\infty }\frac{(n+k)!\,(\left\Vert
\kappa\right\Vert ^{2}Dt^{\alpha })^{n+1}(-c^{2}D^{-1}t^{\beta })^{n+k+1}}{%
n!\,k!\,\Gamma \lbrack \beta (n+k+1)+\alpha (n+1)+1]},
\end{equation*}%
and using $m=n+k$ as index of summation,
\begin{equation*}
H(\kappa,t)=1-\left\Vert \kappa\right\Vert ^{2}c^{2}t^{\alpha +\beta
}\sum_{m=0}^{\infty }\sum_{n=0}^{m}\binom{m}{n}\frac{(-c^{2}D^{-1}t^{\beta
})^{m}(\left\Vert \kappa\right\Vert ^{2}Dt^{\alpha })^{n}}{\Gamma (\beta m+\alpha
n+\alpha +\beta +1)},
\end{equation*}
which completes the proof.\end{proof}
The following result gives another representation of the Fourier transform via a contour integral.
\begin{theorem}\label{th2} The Fourier transform {\rm(\ref{2.8})} of the initial-value
    problem {\rm(\ref{2.2})} and {\rm(\ref{2.7})} is given by the formula%
\begin{equation}
    H(\kappa,t)=\frac{1}{2\pi \mathrm{i}}\int_{\rm{Ha}}\text{e}^{\xi }\xi ^{\alpha
        -1}\frac{\xi ^{\beta }+c^{2}D^{-1}t^{\beta }}{\xi ^{\alpha +\beta
        }+c^{2}D^{-1}\xi ^{\alpha }t^{\beta }+\left\Vert \kappa\right\Vert
        ^{2}c^{2}t^{\alpha +\beta }}\,d\xi ,  \label{2.18}
\end{equation}
where {\rm$\text{Ha}$} is the Hankel contour.
\end{theorem}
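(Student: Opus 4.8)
The plan is to invert the Laplace transform $\tilde{H}(s)$ already computed in the proof of Theorem~\ref{th1} and then rescale the integration variable. By that proof (together with the Lerch uniqueness argument used there), $\tilde{H}(s)$ is the Laplace transform of $H(\kappa,\cdot)$ for all admissible $s$, so the Bromwich inversion formula applies:
\begin{equation*}
H(\kappa,t)=\frac{1}{2\pi\mathrm{i}}\int_{\gamma-\mathrm{i}\infty}^{\gamma+\mathrm{i}\infty}\mathrm{e}^{st}\,\frac{s^{\alpha+\beta-1}+c^{2}D^{-1}s^{\alpha-1}}{s^{\alpha+\beta}+c^{2}D^{-1}s^{\alpha}+\|\kappa\|^{2}c^{2}}\,ds,
\end{equation*}
where $\gamma$ is chosen to the right of every singularity of the integrand.

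First I would justify replacing the vertical Bromwich line by the Hankel contour $\mathrm{Ha}$. Because of the non-integer powers $s^{\alpha}$ and $s^{\alpha+\beta}$, the integrand is single-valued only on the plane cut along the negative real axis, with a branch point at the origin, and the factor $\mathrm{e}^{st}$ enforces decay as $\operatorname{Re}s\to-\infty$. Under the standing assumptions $0<\alpha\le 1$ and $1<\alpha+\beta\le 2$, I would verify that the denominator $s^{\alpha+\beta}+c^{2}D^{-1}s^{\alpha}+\|\kappa\|^{2}c^{2}$ has no zeros in the cut plane $|\arg s|<\pi$, so that no residues are collected, and that the contributions of the large connecting arcs vanish as their radius tends to infinity (a Jordan-type estimate combining $|\mathrm{e}^{st}|$ with the polynomial-in-$s^{\alpha}$ growth of the integrand). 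This analyticity-and-decay bookkeeping, which legitimises collapsing the Bromwich line onto the contour wrapping the cut, is the step I expect to be the main obstacle. Its outcome is
\begin{equation*}
H(\kappa,t)=\frac{1}{2\pi\mathrm{i}}\int_{\mathrm{Ha}}\mathrm{e}^{st}\,\frac{s^{\alpha+\beta-1}+c^{2}D^{-1}s^{\alpha-1}}{s^{\alpha+\beta}+c^{2}D^{-1}s^{\alpha}+\|\kappa\|^{2}c^{2}}\,ds.
\end{equation*}

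Finally I would perform the change of variable $\xi=st$ with $t>0$ fixed. Scaling by a positive constant preserves the shape and orientation of the Hankel contour, so it maps $\mathrm{Ha}$ in the $s$-plane onto $\mathrm{Ha}$ in the $\xi$-plane, while $\mathrm{e}^{st}=\mathrm{e}^{\xi}$ and $ds=t^{-1}\,d\xi$. Substituting $s=\xi/t$ and clearing the powers of $t$ by multiplying numerator and denominator by $t^{\alpha+\beta}$, the numerator becomes $t\,\xi^{\alpha-1}\bigl(\xi^{\beta}+c^{2}D^{-1}t^{\beta}\bigr)$ and the denominator becomes $\xi^{\alpha+\beta}+c^{2}D^{-1}\xi^{\alpha}t^{\beta}+\|\kappa\|^{2}c^{2}t^{\alpha+\beta}$. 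The factor $t$ then cancels against $ds=t^{-1}\,d\xi$, and collecting terms yields exactly~(\ref{2.18}), which completes the proof.
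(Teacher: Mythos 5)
Your route is genuinely different from the paper's: the paper never touches the Bromwich integral, but instead starts from the double series of Theorem~\ref{th1}, inserts the Hankel-contour representation of $1/\Gamma(z)$, performs the binomial and geometric summations on the contour, and finally recombines using $1=\frac{1}{2\pi\mathrm{i}}\int_{\text{Ha}}\text{e}^{\xi}\xi^{-1}d\xi$. Your plan (invert $\tilde H(s)$ by Bromwich, deform to the Hankel contour, rescale $\xi=st$) is more direct, and your algebra in the substitution step is correct. However, it has a genuine gap exactly at the step you flagged as the main obstacle: the claim that the denominator $s^{\alpha+\beta}+c^{2}D^{-1}s^{\alpha}+\|\kappa\|^{2}c^{2}$ has no zeros in the cut plane $|\arg s|<\pi$ is false in general. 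The admissible range includes $\alpha=\beta=1$, where the denominator is the telegraph polynomial $s^{2}+c^{2}D^{-1}s+\|\kappa\|^{2}c^{2}$; for $\|\kappa\|>c/2D$ its zeros are
\begin{equation*}
s=-\frac{c^{2}}{2D}\pm\mathrm{i}\,c\sqrt{\|\kappa\|^{2}-\frac{c^{2}}{4D^{2}}},
\end{equation*}
which lie strictly inside the left half-plane, off the negative real axis; by continuity of the roots in $(\alpha,\beta)$ the same happens for fractional orders near $1$. These poles are precisely what produce the oscillatory $\cos/\sin$ terms in $H(\mu,t)$ displayed in Example~\ref{ex2}, so they cannot be dismissed: collapsing the Bromwich line onto a Hankel contour with a small loop around the origin would drop their residues and yield the wrong answer (for $\alpha=\beta=1$ it would in fact give $0$, since the integrand is single-valued and analytic at the origin).

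The repair is to fix the geometry of the contour rather than the location of the zeros. Any zero $s_{0}$ of the denominator satisfies $|s_{0}|^{\alpha+\beta}\le c^{2}D^{-1}|s_{0}|^{\alpha}+\|\kappa\|^{2}c^{2}$, so all zeros lie in a bounded set; take the Hankel loop radius $R$ larger than this bound. Then the region between the Bromwich line and this large-loop Hankel contour is free of singularities, your decay estimates legitimise the deformation with no residues collected, and the scaling $\xi=st$ carries this contour to a Hankel contour in the $\xi$-plane that encloses all poles of the integrand in (\ref{2.18}). Note that this largeness proviso is not cosmetic: the value of the integral in (\ref{2.18}) depends on the position of the contour relative to those poles, and the paper imposes the equivalent requirement in its own proof through the condition $\left\vert c^{2}D^{-1}(\rho^{\alpha}+\|\kappa\|^{2}D)/\rho^{\alpha+\beta}\right\vert<1$ on the contour (at a zero of the denominator this ratio is $\ge 1$, so the condition keeps the contour outside the pole region). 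With that single correction your argument is sound and gives an alternative, arguably shorter, proof of the theorem.
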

\begin{proof}
It follows from the contour integral representation of the reciprocal gamma
function
\begin{equation*}
\frac{1}{\Gamma (z)}=\frac{1}{2\pi \mathrm{i}}\int_{\text{Ha}}\text{e}^{\xi
}\xi ^{-z}\,d\xi ,
\end{equation*}%
that
\begin{equation*}
H(\kappa,t)=\left[ 1-\frac{\left\Vert \kappa\right\Vert ^{2}c^{2}t^{\alpha +\beta }}{%
2\pi \mathrm{i}}\int_{\text{Ha}}\text{e}^{\xi }\xi ^{-(\alpha +\beta
+1)}\sum_{m=0}^{\infty }\sum_{n=0}^{m}\binom{m}{n}(-c^{2}D^{-1}t^{\beta }\xi
^{-\beta })^{m}(\left\Vert \kappa\right\Vert ^{2}Dt^{\alpha }\xi ^{-\alpha
})^{n}\,d\xi \right].
\end{equation*}%
As the internal sum can be simplified as
\begin{equation*}
\sum_{n=0}^{m}\binom{m}{n}(\left\Vert \kappa\right\Vert ^{2}Dt^{\alpha }\xi
^{-\alpha })^{n}=(1+\left\Vert \kappa\right\Vert ^{2}Dt^{\alpha }\xi ^{-\alpha
})^{m},
\end{equation*}%
then
\begin{equation*}
H(\kappa,t)=1-\frac{\left\Vert \kappa\right\Vert ^{2}c^{2}t^{\alpha +\beta }}{2\pi
\mathrm{i}}\int_{\text{Ha}}\text{e}^{\xi }\xi ^{-(\alpha +\beta
+1)}\sum_{m=0}^{\infty }[(-c^{2}D^{-1}t^{\beta }\xi ^{-\beta })(1+\left\Vert
\kappa\right\Vert ^{2}Dt^{\alpha }\xi ^{-\alpha })]^{m}\,d\xi .
\end{equation*}%
It is convenient now to change the integration variable from $\xi $ to $\rho
=\xi /t$ (for $t>0$), that is,
\begin{equation*}
H(\kappa,t)=\left[ 1-\frac{\left\Vert \kappa\right\Vert ^{2}c^{2}}{2\pi \mathrm{i}}%
\int_{\text{Ha}}\text{e}^{\rho t}\rho ^{-(\alpha +\beta
+1)}\sum_{m=0}^{\infty }\left[ -c^{2}D^{-1}\frac{(\rho ^{\alpha }+\left\Vert
\kappa\right\Vert ^{2}D)}{\rho ^{\alpha +\beta }}\right] ^{m}\,d\rho \right] .
\end{equation*}%
If the contour is taken in a such a way that
\begin{equation*}
\left\vert c^{2}D^{-1}\frac{(\rho ^{\alpha }+\left\Vert \kappa\right\Vert ^{2}D)}{%
\rho ^{\alpha +\beta }}\right\vert <1,
\end{equation*}%
then it follows that
\begin{equation*}
H(\kappa,t)=1-\frac{\left\Vert \kappa\right\Vert ^{2}c^{2}}{2\pi \mathrm{i}}\int_{%
\text{Ha}}\frac{\text{e}^{\rho t}\rho ^{-1}}{\rho ^{\alpha +\beta
}+c^{2}D^{-1}\rho ^{\alpha }+\left\Vert \kappa\right\Vert ^{2}c^{2}}\,d\rho .
\end{equation*}%
Let us return to the $\xi $ variable of the integration, that is,
\begin{equation*}
H(\kappa,t)=1-\frac{\left\Vert \kappa\right\Vert ^{2}c^{2}t^{\alpha +\beta }}{2\pi
\mathrm{i}}\int_{\text{Ha}}\frac{\text{e}^{\xi }\xi ^{-1}}{\xi ^{\alpha
+\beta }+c^{2}D^{-1}\xi ^{\alpha }t^{\beta }+\left\Vert \kappa\right\Vert
^{2}c^{2}t^{\alpha +\beta }}\,d\xi .
\end{equation*}%
By the contour representation of the gamma function
\begin{equation*}
1=\frac{1}{\Gamma (1)}=\frac{1}{2\pi \mathrm{i}}\int_{\text{Ha}}\text{e}%
^{\xi }\xi ^{-1}\,d\xi
\end{equation*}%
and therefore
\begin{equation*}
H(\kappa,t)=\frac{1}{2\pi \mathrm{i}}\int_{\text{Ha}}\frac{\text{e}^{\xi }}{\xi }%
\left[ 1-\frac{\left\Vert \kappa\right\Vert ^{2}c^{2}t^{\alpha +\beta }}{\xi
^{\alpha +\beta }+c^{2}D^{-1}\xi ^{\alpha }t^{\beta }+\left\Vert
\kappa\right\Vert ^{2}c^{2}t^{\alpha +\beta }}\right] \,d\xi,
\end{equation*}%
which gives the representation (\ref{2.18}).
\end{proof}
\begin{example} Let us consider the particular case when $\alpha =\beta $. In this case the
denominator in {\rm (\ref{2.18})} can be written as
\begin{equation*}
\xi ^{2\alpha }+c^{2}D^{-1}t^{\alpha }\xi ^{\alpha }+\left\Vert \kappa\right\Vert
^{2}c^{2}t^{2\alpha }=(\xi ^{\alpha }+A_{+}t^{\alpha })(\xi ^{\alpha
}+A_{-}t^{\alpha }),
\end{equation*}%
where
\begin{equation*}
A_{\pm }=\frac{c^{2}}{2D}(1\pm \Omega ),\qquad \Omega =\sqrt{1-\frac{%
4\left\Vert \kappa\right\Vert ^{2}D^{2}}{c^{2}}}.
\end{equation*}%
We also have
\begin{equation*}
\frac{\xi ^{\alpha }+c^{2}D^{-1}t^{\alpha }}{(\xi ^{\alpha }+A_{+}t^{\alpha
})(\xi ^{\alpha }+A_{-}t^{\alpha })}=\left( \frac{1+\Omega }{2\Omega }%
\right) \frac{1}{\xi ^{\alpha }+A_{-}t^{\alpha }}-\left( \frac{1-\Omega }{%
2\Omega }\right) \frac{1}{\xi ^{\alpha }+A_{+}t^{\alpha }}.
\end{equation*}%
In this case the solution {\rm (\ref{2.18})} reduces to
\[
H(\kappa,t)= \bigg[\left( \frac{1+\Omega }{2\Omega }\right) \frac{1}{2\pi
\mathrm{i}}\int_{\text{Ha}}\text{e}^{\xi }\xi ^{\alpha -1}\frac{1}{\xi
^{\alpha }+A_{-}t^{\alpha }}\,d\xi  -\left( \frac{1-\Omega }{2\Omega }\right) \frac{1}{2\pi \mathrm{i}}\int_{%
\text{Ha}}\text{e}^{\xi }\xi ^{\alpha -1}\frac{1}{\xi ^{\alpha
}+A_{+}t^{\alpha }}\,d\xi \bigg].
\]

The contour integral representation of the one-parameter Mittag-Leffler
function defined as ${\rm{E}}_{\alpha }(z)={\rm{E}}_{\alpha ,1}^{1}(z)$
is
\begin{equation*}
{\rm{E}}_{\alpha }(z)=\frac{1}{2\pi \mathrm{i}}\int_{\text{Ha}}\frac{\text{e}%
^{\xi }\xi ^{\alpha -1}}{\xi ^{\alpha }-z}\,d\xi .
\end{equation*}%
Then $H(\kappa,t)$ can be given by
\begin{equation*}
H(\kappa,t)=\left( \frac{1+\Omega }{2\Omega }\right) {\rm{E}}_{\alpha
}(-A_{-}t^{\alpha })-\left( \frac{1-\Omega }{2\Omega }\right) {\rm{E}}%
_{\alpha }(-A_{+}t^{\alpha }),
\end{equation*}%
which is the solution obtained in {\rm \cite{LV}.}
\end{example}

\begin{remark}\label{rem1} The inverse Fourier transform of {\rm (\ref{2.18})}
has an independent interest.

The Fourier transform $H(\kappa,t)$ is given by {\rm (\ref{2.18})} with initial condition {\rm (\ref{2.12}).} To calculate the inverse Fourier transform  $Q(x,t)$ one has to compute the integral
\begin{equation*}
Q(x,t)=\frac{1}{(2\pi )^{3}}\int_{\mathbb{R}^{3}}\text{e}^{-\mathrm{i}%
 \left\langle\kappa, x\right\rangle}\frac{A}{B+\left\Vert \kappa\right\Vert ^{2}}%
\,d \kappa=\frac{1}{2\pi ^{2}r}\int_{0}^{\infty }\frac{A\mu\sin (\mu r)}{B+\mu^{2}}\,d\mu,
\label{2.24}
\end{equation*}%
where $r=\left\Vert x\right\Vert $ and $\mu=\left\Vert \kappa\right\Vert ^{2}$.
Indeed
\begin{equation*}
q(x,t)=\frac{1}{2\pi \mathrm{i}}\int_{\text{Ha}}\text{e}^{\xi }\xi ^{\alpha
-1}\,(\xi ^{\beta }+c^{2}D^{-1}t^{\beta })\mathcal{F}^{-1}\left[ \frac{1}{%
\xi ^{\alpha +\beta }+c^{2}D^{-1}\xi ^{\alpha }t^{\beta }+\left\Vert
\kappa\right\Vert ^{2}c^{2}t^{\alpha +\beta }}\right] \,d\xi
\end{equation*}%
and
\begin{equation*}
\begin{split}
\mathcal{F}^{-1}\left[ \frac{1}{\xi ^{\alpha +\beta }+c^{2}D^{-1}\xi
^{\alpha }t^{\beta }+\left\Vert \kappa\right\Vert ^{2}c^{2}t^{\alpha +\beta }}%
\right] & =\frac{1}{2\pi ^{2}r}\int_{0}^{\infty }\frac{\mu\sin (\mu r)}{\xi
^{\alpha +\beta }+c^{2}D^{-1}\xi ^{\alpha }t^{\beta }+\mu^{2}c^{2}t^{\alpha
+\beta }}\,dk \\
& =\frac{1}{2\pi ^{2}rc^{2}t^{\alpha +\beta }}\int_{0}^{\infty }\frac{%
\mu^{\prime }\sin {\mu^{\prime }(r/ct^{(\alpha +\beta )/2})}}{\xi ^{\alpha
+\beta }+c^{2}D^{-1}\xi ^{\alpha }t^{\beta }+\mu^{\prime }{}^{2}}\,d\mu^{\prime
}.
\end{split}
\end{equation*}%
This integral can be evaluated using
\begin{equation*}
\int_{0}^{\infty }\frac{\mu\sin (a\mu)}{b^{2}+\mu^{2}}\,dk=\frac{\pi }{2}{\rm{e}}%
^{-ab},
\end{equation*}%
with $a>0$ and ${\rm{Re}}b>0$, which gives
\begin{equation*}
Q(x,t)=\frac{1}{2\pi \mathrm{i}}\frac{1}{4\pi rc^{2}t^{\alpha +\beta }}\int_{%
\text{Ha}}\text{e}^{\xi }\xi ^{\alpha -1}\,(\xi ^{\beta
}+c^{2}D^{-1}t^{\beta }){\displaystyle\text{e}^{-\frac{r\xi ^{\alpha /2}}{%
ct^{(\alpha +\beta )/2}}\sqrt{\xi ^{\beta }+cD^{-1}t^{\beta }}}}\,d\xi .
\end{equation*}
\end{remark}

The function $H(\kappa,t)=H(\left\Vert \kappa\right\Vert ,t),$ $\kappa\in
\mathbb{R}^{3},$ $t\geq 0,$ given by (\ref{2.8}) is radial. We will use the same notation
for function $H(\mu ,t),\mu =\left\Vert \kappa\right\Vert \geq 0,$ $t\geq 0.$

\begin{theorem}\label{th2} Let us assume that for every $t\geq 0$%
\begin{equation}
\int_{0}^{\infty }\mu ^{2}\left\vert H(\mu ,t)\right\vert ^{2}G(d\mu )<\infty
,  \label{2.27}
\end{equation}%
where the finite measure $G$ is given by {\rm (\ref{2.4}).}

Then, the solution $q(x,t)=q(x,t,\omega ),$ $x\in
\mathbb{R}
^{3},$ $\omega \in \mathrm{\Omega },$ $t\geq 0,$ of initial-value problem {\rm (\ref%
{2.2}), (\ref{2.5})} can be written as the stochastic integral%
\begin{equation}
q(x,t)=\int_{%
\mathbb{R}^{3}}e^{\mathrm{i}\left\langle k,x\right\rangle }H(\kappa,t)Z(dk),  \label{2.28}
\end{equation}%
where $H(\kappa,t)$ is given by {\rm (\ref{2.8}),} and the random measure $Z(\cdot)$ is
defined in {\rm (\ref{2.5}).}

The covariance function of the spatio-temporal random field {\rm(\ref{2.28})} is
of the form
\begin{equation}
\mathrm{Cov}(q(x,t),q(x^{\prime },t^{\prime }))=\int_{
\mathbb{R}^{3}}e^{\mathrm{i}\left\langle k,x-x^{\prime }\right\rangle}H(\kappa,t)H(k,t^{\prime })F(dk),
\label{2.29}
\end{equation}%
where the spectral measure $F$ is defined by {\rm (\ref{2.5}).}
\end{theorem}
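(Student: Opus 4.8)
The plan is to transfer the deterministic Green's function representation of the solution to the level of the spectral random measure $Z$, and then to read off the covariance \eqref{2.29} from the isometry of $Z$.

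First, I would record that condition \eqref{2.27} makes the right-hand side of \eqref{2.28} meaningful. Since $H(\kappa,t)=H(\|\kappa\|,t)$ is radial and, by the relation $G(\mu)=\int_{\|\lambda\|<\mu}F(d\lambda)$ in \eqref{2.4}, the integral of a radial function against $F$ coincides with its integral against $G$, one has $\int_{\mathbb{R}^3}|H(\kappa,t)|^2F(d\kappa)=\int_0^\infty|H(\mu,t)|^2G(d\mu)$. Because $G$ is finite, $H(0,t)=1$ (so $H$ is bounded near the origin by Theorem~\ref{th1}), and \eqref{2.27} controls the tail via $|H|^2\le\mu^2|H|^2$ for $\mu\ge1$, the integrand $e^{\mathrm{i}\langle\kappa,x\rangle}H(\kappa,t)$ lies in $L^2(\mathbb{R}^3,F)$; hence the stochastic integral \eqref{2.28} is a well-defined element of $\mathscr{L}_2(\Omega)$ for every $x$ and $t$.

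Second, I would identify \eqref{2.28} as the solution of the initial-value problem. The most direct route is to verify the equation frequency by frequency: applying the operator in \eqref{2.2} to the candidate field \eqref{2.28} and using $\Delta e^{\mathrm{i}\langle\kappa,x\rangle}=-\|\kappa\|^2e^{\mathrm{i}\langle\kappa,x\rangle}$, the time-fractional operators act only on $H(\kappa,t)$, so \eqref{2.28} solves \eqref{2.2} precisely because $H(\kappa,t)$ satisfies the Fourier-transformed equation derived in the proof of Theorem~\ref{th1}. The initial conditions match as well, since $H(\kappa,0)=1$ reduces \eqref{2.28} at $t=0$ to the spectral representation \eqref{2.5} of $\eta$, while $\partial_tH(\kappa,0)=0$ gives the second condition in \eqref{2.3}. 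Equivalently, one may start from the deterministic convolution $q(x,t)=\int_{\mathbb{R}^3}Q(x-y,t)\eta(y)\,dy$, valid by linearity and the constant-coefficient, translation-invariant structure of \eqref{2.2}, insert the spectral representation \eqref{2.5} of $\eta$, and evaluate the inner integral $\int_{\mathbb{R}^3}Q(x-y,t)e^{\mathrm{i}\langle\lambda,y\rangle}\,dy=e^{\mathrm{i}\langle\lambda,x\rangle}H(\lambda,t)$ via the substitution $z=x-y$ and definition \eqref{2.8} together with the radiality $H(-\lambda,t)=H(\lambda,t)$.

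The step I expect to be the main obstacle is the interchange of the deterministic spatial integration with the stochastic integration against $Z$ (a stochastic Fubini argument), together with the companion justification of differentiating under the stochastic integral when verifying the equation. This is exactly where \eqref{2.27} enters: the $L^2(F)$ bounds above permit approximation by simple functions, the exchange of the two integrals in $\mathscr{L}_2(\Omega)$, and passage to the limit. Once \eqref{2.28} is established, the covariance \eqref{2.29} follows from the defining isometry $\mathrm{E}[Z(\Delta_1)\overline{Z(\Delta_2)}]=F(\Delta_1\cap\Delta_2)$ of \eqref{2.5}: writing $\mathrm{Cov}(q(x,t),q(x',t'))=\mathrm{E}[q(x,t)\overline{q(x',t')}]$, the resulting double stochastic integral collapses onto the diagonal and produces $\int_{\mathbb{R}^3}e^{\mathrm{i}\langle\kappa,x-x'\rangle}H(\kappa,t)\overline{H(\kappa,t')}F(d\kappa)$; since $H$ is real-valued (its series in Theorem~\ref{th1} has real terms for real arguments), the conjugate may be dropped, which yields \eqref{2.29}.
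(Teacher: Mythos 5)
Your proposal is correct, and your ``equivalently'' route is precisely the paper's proof: the authors write $q(x,t)=\int_{\mathbb{R}^3}\eta(y)Q(x-y,t)\,dy=\int_{\mathbb{R}^3}\eta(x-z)Q(z,t)\,dz$, insert the spectral representation \eqref{2.5} of $\eta$, interchange the deterministic and stochastic integrals, and identify the inner integral with $H(\kappa,t)$ via \eqref{2.8} and radiality --- all in a four-line chain of equalities, with every interchange left implicit and with no separate argument for the covariance formula \eqref{2.29} or for membership of the integrand in $L^2(\mathbb{R}^3,F)$. Your additions --- the verification that \eqref{2.27} together with finiteness of $G$ and continuity of $H(\cdot,t)$ near $\mu=0$ makes \eqref{2.28} a well-defined element of $\mathscr{L}_2(\Omega)$, the frequency-by-frequency check against the Fourier-transformed equation from Theorem~\ref{th1}, and the derivation of \eqref{2.29} from the isometry of $Z$ plus the real-valuedness of $H$ --- are details the paper takes for granted, so your write-up is, if anything, more complete than the published proof.
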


\begin{proof} If $q$ is defined in (\ref{2.8}), then by (\ref{2.28}) it holds
\begin{eqnarray*}
q(x,t) &=&\int_{%
%TCIMACRO{\U{211d} }%
%BeginExpansion
\mathbb{R}
%EndExpansion
^{3}}\eta (y)Q(x-y,t)=\int_{%
%TCIMACRO{\U{211d} }%
%BeginExpansion
\mathbb{R}
%EndExpansion
^{3}}\eta (x-z)Q(z,t)dz \\
&=&\int_{%
%TCIMACRO{\U{211d} }%
%BeginExpansion
\mathbb{R}
%EndExpansion
^{3}}e^{\mathrm{i}\left\langle k,x\right\rangle }\left[ \int_{%
%TCIMACRO{\U{211d} }%
%BeginExpansion
\mathbb{R}
%EndExpansion
^{3}}e^{\mathrm{i}(k,-z^{\prime })}Q(z,t)dz\right] Z(dk)=\int_{%
%TCIMACRO{\U{211d} }%
%BeginExpansion
\mathbb{R}
%EndExpansion
^{3}}e^{\mathrm{i}\left\langle k,x\right\rangle }H(\kappa,t)Z(dk),
\end{eqnarray*}%
where all stochastic integrals exist in $\mathscr{L}_{2}(\mathrm{\Omega })$
sense.
\end{proof}
\section{Angular Time-Dependent Power Spectrum}\label{sec3}

Consider the unit radius sphere $\mathbf{S}^{2}=\left\{ x\in
\mathbb{R}^{3}:\left\Vert x\right\Vert =1\right\} $ that is centred at the origin with the surface Lebesque measure $\sigma
(dx)=\sigma (d\theta ,d\varphi )=\sin \theta d\theta d\varphi,$ $\theta
\in \lbrack 0,\pi ],$ $\varphi \in \lbrack 0,2\pi ].$

A spatio-temporal spherical random field defined on a probability space $(%
\mathrm{\Omega },\mathcal{F},\mathrm{P})$ is a random
function $T(x,t)=T(x,t,\omega )=T(\theta ,\varphi ,t),$ $x\in \mathbf{S}%
^{2},$ $t\geq 0.$

We consider a real-valued spatio-temporal spherical random field $T$ with
zero mean and finite second-order moment which is continuous in the
mean-square sense, see \cite[p.9]{MP} for definitions and other details.

We assume that the random field $T$ is second-order isotropic, that is $%
\mathrm{E}[T(x,t)T(y,t^{\prime })]=\mathrm{E}[T(\mathbf{g}x,t)T(\mathbf{g}y,t)]$
for every $\mathbf{g}\in SO(3),$ where $SO(3)$ denotes  the group of rotation in $\mathbb{R}
^{3}.$ This is equivalent to the condition that the covariance function $%
\mathrm{E}[T(\theta ,\varphi ,t)T(\theta ^{\prime },\varphi ^{\prime
},t^{\prime })]$ depends only on the angular distance $\gamma =\gamma
_{PP^{\prime }}$ between two points $P=(\theta ,\varphi )$ and $P^{\prime
}=(\theta ^{\prime },\varphi ^{\prime })$ on $\mathbf{S}^{2}$ for every $%
t,t^{\prime }\geq 0.$

Under these conditions, the random field $T\ $can be expanded in  the
mean-square sense as the Laplace series
\begin{equation*}
T(\theta ,\varphi ,t)=\sum_{l=0}^{\infty }\sum_{m=-l}^{l}Y_{lm}(\theta
,\varphi )g_{lm}(t),
\end{equation*}%
where the functions $Y_{lm}(\theta ,\varphi )$ represents the complex
spherical harmonics and the coefficients $g_{lm}(t)$ are complex-valued
stochastic processes defined by
\begin{equation*}
g_{lm}(t)=\int_{0}^{\pi }\int_{0}^{2\pi }T(\theta ,\varphi ,t)\overline{%
Y_{lm}(\theta ,\varphi )}\sin \theta d\theta d\varphi,
\end{equation*}%
and
\begin{equation*}
\mathrm{E}g_{lm}(t)\overline{g_{l^{\prime }m^{\prime }}(t^{\prime })}=\delta
_{ll^{\prime }}\delta _{mm^{\prime }}C_{l}(t,t^{\prime }),\ -l\leq m\leq l,\
-l^{\prime }\leq m^{\prime }\leq l^{\prime },\ l ,l^{\prime }=0,1,2,\ldots .
\end{equation*}

The functional series $\left\{ C_{l}(t,t^{\prime }),\ l=0,1,2,\ldots
\right\} $ is called the angular time-dependent power spectrum of the
isotropic random field $T(\theta ,\varphi ,t)$ on $\mathbf{S}^{2}.$ For
every $t,t^{\prime }\geq 0$ it satisfies the condition
\begin{equation*}
\sum_{l=0}^{\infty }(2l+1)C_{l}(t,t^{\prime })<\infty .
\end{equation*}%

Then, a covariance function between two locations with the angular
distance $\gamma $ at times $t$ and $t^{\prime }$ is equal
\begin{equation*}
R(\cos \gamma ,t,t^{\prime })=\mathrm{E}[T(\theta ,\varphi ,t)T(\theta
^{\prime },\varphi ^{\prime },t^{\prime })]=\frac{1}{4\pi }\sum_{l=0}^{\infty
}(2l+1)C_{l}(t,t^{\prime })P_{l}(\cos \gamma ),
\end{equation*}%
where
\begin{equation*}
P_{l}(x)=\frac{1}{2^{l}\cdot l!}\frac{d^{l}}{dx^{l}}(x^{2}-1)^{2}
\end{equation*}%
is the $l$-th Legendre polynomial.

The random field $q(x,t),$ $x\in
\mathbb{R}^{3},$ $t\geq 0,$ given by (\ref{2.28}) is homogeneous and isotropic in~$x.$ Hence, using the addition theorem for Bessel functions, its covariance
function (\ref{2.29}) can be written in the form%
\begin{eqnarray*}
\mathrm{Cov}(q(x,t),q(x^{\prime },t^{\prime })) &=&\int_{0}^{\infty }\frac{%
\sin (\mu \left\Vert x-x^{\prime }\right\Vert )}{\mu \left\Vert x-x^{\prime
}\right\Vert }H(\mu ,t)H(\mu ,t^{\prime })G(d\mu ) \\
&=&2\pi ^{2}\sum_{l=0}^{\infty }\sum_{m=-l}^{l}Y_{lm}(\theta ,\varphi )%
\overline{Y_{lm}(\theta ^{\prime },\varphi ^{\prime }})\\
&\times& \int_{0}^{\infty }\frac{J_{l+1/2}(\mu r)}{(\mu r)^{1/2}}\cdot \frac{%
J_{l+1/2}(\mu r^{\prime })}{(\mu r^{\prime })^{1/2}}H(\mu ,t)H(\mu
,t^{\prime })G(d\mu ),
\end{eqnarray*}%
where $(r,\theta ,\varphi )$ and $(r^{\prime },\theta ^{\prime },\varphi
^{\prime })$ are spherical coordinates of $x\in
\mathbb{R}
^{3}$ and $y\in
\mathbb{R}
^{3}$ respectively.

Using the Karhunen theorem \cite[p.10]{L} one obtains the following spectral
representation of homogeneous and isotropic random field in $%
%TCIMACRO{\U{211d} }%
%BeginExpansion
\mathbb{R}
%EndExpansion
^{3}:$%
\begin{equation}
q(x,t)=q(r,\theta ,\varphi ,t)=\pi \sqrt{2}\sum_{l=0}^{\infty
}\sum_{m=-l}^{l}Y_{lm}(\theta ,\varphi )\int_{0}^{\infty }\frac{%
J_{l+1/2}(\mu r)}{(\mu r)^{1/2}}H(\mu ,t)Z_{lm}(d\mu ),  \label{3.6}
\end{equation}%
where the random measures $Z_{lm}(\cdot)$ are defined in (\ref{2.6}).

The restriction of the homogeneous and isotropic random field (\ref{3.6}) to
the sphere $\mathbf{S}^{2}$ is an isotropic spherical random field $%
T_{H}(x,t),$ $x\in \mathbf{S}^{2},$ $t\geq 0,$ which will be called a spherical fractional
hyperbolic diffusion random field (SFHDRF).

In this case
\begin{equation}\label{cov}
\mathrm{Cov}(T_{H}(x,t),T_{H}(x^{\prime },t^{\prime }))=R(\cos \gamma
,t,t^{\prime })=\int_{0}^{\infty }\frac{\sin (2\mu \sin  \left(\frac{\gamma }{2}\right))}{%
2\sin \left(\frac{\gamma }{2}\right)}H(\mu ,t)H(\mu ,t^{\prime })G(d\mu ),
\end{equation}%
where the Euclidean distance $\left\Vert x-x^{\prime }\right\Vert $ (also called
the chordal distance between two points $x,x^{\prime }\in \mathbf{S}%
^{2}\subset
\mathbb{R}
^{3}$) can be expressed in terms of the great circle (also known as geodesic
or spherical) distance as follows: $\left\Vert x-x^{\prime }\right\Vert
=2\sin (\gamma /2),$ $\gamma =\gamma (x,x^{\prime })=\arccos(
\left\langle x,x^{\prime }\right\rangle ).$

By addition theorem for spherical Bessel functions, the isotropic random
field $T_{H}(x,t),$ $x\in \mathbf{S}^{2},$ $t>0,$ has the following spectral
representation:%
\begin{equation}
T_{H}(x,t)=\sum_{l=0}^{\infty }\sum_{m=-l}^{l}Y_{lm}(\theta ,\varphi
)a_{lm}^{H}(t),  \label{3.8}
\end{equation}%
with the stochastic processes
\begin{equation}\label{alm}
a_{lm}^{H}(t)=\pi \sqrt{2}\int_{0}^{\infty }\frac{J_{l+1/2}(\mu r)}{(\mu
r)^{1/2}}H(\mu ,t)Z_{lm}(d\mu ),\ t\geq 0.
\end{equation}

Thus, the angular spectrum of the SFHDRF in (\ref{3.8}) is of the form%
\begin{equation}
C_{l}(t,t^{\prime })=\mathrm{E}a_{lm}^{H}(t)\overline{a_{lm}^{H}(t^{\prime })%
}=2\pi ^{2}\int_{0}^{\infty }\frac{J^2_{l+1/2}(\mu )}{\mu }H(\mu ,t)H(\mu ,t')G(d\mu ),\
l=0,1,2,\ldots ,  \label{3.9}
\end{equation}%
where%
\begin{equation} \label{3.10}
    \begin{aligned}
H(\mu ,t) &=1+\sum_{n=0}^{\infty }\left( -\mu ^{2}c^{2}t^{\alpha +\beta
}\right) ^{n+1}{\rm{E}}_{\beta ,(\alpha +\beta )(n+1)-1}^{n+1}\left( -\frac{%
c^{2}}{D}t^{\beta }\right)  \\
 &=1-\mu ^{2}c^{2}t^{\alpha +\beta }\sum_{m=0}^{\infty }\sum_{n=0}^{\infty }%
\binom{m}{n}\frac{\left( -\frac{c^{2}}{D}t^{\beta }\right) ^{m}(\mu
^{2}Dt^{\alpha })^{n}}{\Gamma (\beta m+\alpha n+\alpha +\beta +1)}.
\end{aligned}
\end{equation}

We can summarise the above results as the following theorem, which is the main result of the paper.

\begin{theorem}\label{th3}  Under \ the condition {\rm (\ref{2.27})} the SFHDRF in {\rm (\ref{3.8})} is an isotropic random field on $\mathbf{S}^{2}$ with the angular power spectrum given
by formulae {\rm (\ref{3.9})} and {\rm (\ref{3.10}).}
\end{theorem}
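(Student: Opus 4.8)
The plan is to assemble the claim from the spectral representation already derived and then to verify the two substantive assertions: that $T_H$ is a genuine, mean-square well-defined \emph{isotropic} field on $\mathbf{S}^2$, and that its angular spectrum equals exactly (\ref{3.9})--(\ref{3.10}).

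First I would start from the representation (\ref{3.6}) of $q(x,t)$, obtained by applying the Karhunen theorem to the homogeneous and isotropic field (\ref{2.28}) together with the addition theorem for Bessel functions. Restricting to the unit sphere amounts to setting $r=\left\Vert x\right\Vert =1$, which turns (\ref{3.6}) into the Laplace series (\ref{3.8}) with coefficient processes $a_{lm}^{H}(t)$ given by (\ref{alm}) at $r=1$. The first point to check is that each $a_{lm}^{H}(t)$ is a well-defined element of $\mathscr{L}_2(\mathrm{\Omega})$. Using the isometry $\mathrm{E}[Z_{lm}(\Delta_1)\overline{Z_{l'm'}(\Delta_2)}]=\delta_{ll'}\delta_{mm'}G(\Delta_1\cap\Delta_2)$ one obtains
\[
\mathrm{E}\left\vert a_{lm}^{H}(t)\right\vert ^2 = 2\pi^2\int_0^\infty \frac{J_{l+1/2}^2(\mu)}{\mu}\left\vert H(\mu,t)\right\vert^2 G(d\mu),
\]
and since $J_{l+1/2}^2(\mu)/\mu$ is bounded on $[0,\infty)$ (it behaves like $\mu^{2l}$ near the origin and like $\mu^{-2}$ at infinity), finiteness is guaranteed by (\ref{2.27}).

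Next I would establish isotropy and identify the spectrum together. Applying the same isometry to two coefficients yields $\mathrm{E}[a_{lm}^{H}(t)\overline{a_{l'm'}^{H}(t')}]=\delta_{ll'}\delta_{mm'}C_l(t,t')$ with $C_l$ as in (\ref{3.9}); the Kronecker deltas are forced by the orthogonality of the family $\{Z_{lm}\}$ across distinct pairs $(l,m)$. This diagonal covariance structure is precisely the defining property of second-order isotropy, and it is consistent with the covariance (\ref{cov}), which depends on the two arguments only through the angular distance $\gamma$ via the chordal distance $\left\Vert x-x'\right\Vert=2\sin(\gamma/2)$. Substituting the explicit series (\ref{3.10}) for $H(\mu,t)$ into (\ref{3.9}) then gives the stated form of the angular power spectrum.

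Finally I would verify the summability condition $\sum_{l=0}^\infty (2l+1)C_l(t,t')<\infty$ that qualifies $\{C_l\}$ as an angular power spectrum. Interchanging the sum and the integral (legitimate by Tonelli's theorem applied to the nonnegative kernel) and using the Bessel identity $\sum_{l=0}^\infty (2l+1)J_{l+1/2}^2(\mu)=\tfrac{2\mu}{\pi}$ collapses the series to $4\pi\int_0^\infty H(\mu,t)H(\mu,t')\,G(d\mu)$, which is finite by the Cauchy--Schwarz inequality together with (\ref{2.27}). I expect the main obstacle to be not any single computation but the careful bookkeeping of mean-square convergence: one must ensure that the Laplace series (\ref{3.8}) converges in $\mathscr{L}_2(\mathrm{\Omega})$ strongly enough to justify interchanging summation with the stochastic integration and the expectation, and that restricting the $\mathbb{R}^3$ field to $\mathbf{S}^2$ preserves mean-square continuity. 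Condition (\ref{2.27}) is exactly what controls all of these interchanges.
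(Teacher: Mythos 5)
Your proposal is correct and follows essentially the same route as the paper, which obtains Theorem \ref{th3} as a summary of the preceding derivations: the Karhunen representation (\ref{3.6}), its restriction to $\mathbf{S}^{2}$ giving (\ref{3.8})--(\ref{alm}), and the orthogonality of the measures $Z_{lm}$ yielding (\ref{3.9})--(\ref{3.10}). Your additional checks --- the $\mathscr{L}_{2}(\mathrm{\Omega})$ well-definedness of $a_{lm}^{H}(t)$ and the summability $\sum_{l}(2l+1)C_{l}(t,t')<\infty$ via the identity $\sum_{l}(2l+1)J_{l+1/2}^{2}(\mu)=2\mu/\pi$ --- are sound refinements that the paper leaves implicit (noting only that near $\mu=0$ one uses boundedness of $H(\mu,t)$ and finiteness of $G$ rather than (\ref{2.27}) itself).
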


\begin{example}\label{ex2} Consider the Mat\'{e}rn class of covariance functions for the initial condition {\rm (\ref{2.4})} which has the following form:
\begin{equation*}
\mathrm{E}\eta (x)\eta (x^{\prime })=\sigma ^{2}\frac{2^{1-\nu }}{\Gamma
(\nu )}\left( a\left\Vert x-x^{\prime }\right\Vert \right) ^{\nu }K_{\nu
}(a\left\Vert x-x^{\prime }\right\Vert ),\ x,x^{\prime }\in
\mathbb{R}
^{3},
\end{equation*}%
where $\sigma ^{2}>0,$ $a>0,$  and $K_{\nu }$ is the modified Bessel
function of the second kind of order~$\nu >0.$  This class found numerous applications in statistics and machine learning, see, for example, {\rm \cite{LM}, \cite{Por},} and references therein.

Then, the spectral measure $G$
in {\rm (\ref{2.4})} has the isotropic spectral density $g(\mu )$ defined by  $G^{\prime }(\mu
)=4\pi \mu ^{2}g(\mu ),$ which is equal
\begin{equation*}
g(\mu )=\frac{\sigma ^{2}\Gamma (\nu +\frac{3}{2})a^{2\nu }}{\pi
^{3/2}\Gamma (\nu )}\frac{1}{(a^{2}+\mu ^{2})^{\nu +\frac{3}{2}}},\ \mu \geq
0.
\end{equation*}

Here, the parameter $\nu $ controls the degree of differentiability of
random field $\eta (x),x\in
%TCIMACRO{\U{211d} }%
%BeginExpansion
\mathbb{R}
%EndExpansion
^{3},$ which represent the initial condition {\rm (\ref{2.4}),} $\sigma $ is
field's variance and the parameter $a$ is the scale parameter.

Thus, condition {\rm (\ref{2.27})} is equivalent to
\begin{equation*}
\int_{0}^{\infty }\frac{\mu ^{2}}{(a^{2}+\mu ^{2})^{\nu +\frac{3}{2}}}%
\left\vert H(\mu ,t)\right\vert ^{2}d\mu <\infty ,
\end{equation*}%
for every $t\geq 0.$ It is satisfied for $\alpha =\beta \in (0,1],$
see {\rm \cite{LV}} for details, and for general $1<\alpha +\beta \leq 2$
for $\nu >0.$

Let us consider the partial case of the initial-value problem {\rm (\ref{2.2}), (\ref%
{2.3}),} when $\alpha =\beta \in (0,1].$ Then, see Remark~{\rm\ref{rem1},}
\begin{equation*}
H(\mu ,t)=\left( \frac{1+\Omega }{2\Omega }\right) {\rm{E}}_{\alpha
}(-A_{-}t^{\alpha })+\left( \frac{1-\Omega }{2\Omega }\right) {\rm{E}}%
_{\alpha }(-A_{+}t^{\alpha })=
\end{equation*}%
\begin{equation*}
=\frac{1}{2}({\rm{E}}_{\alpha }(-A_{-}t^{\alpha })+{\rm{E}}_{\alpha
}(-A_{+}t^{\alpha }))+\frac{1}{2\Omega }({\rm{E}}_{\alpha }(-A_{-}t^{\alpha
})-{\rm{E}}_{\alpha }(-A_{+}t^{\alpha })),
\end{equation*}%
where
\begin{equation*}
A_{\pm }=A_{\pm }(\mu )=\frac{c^{2}}{2D}(1\pm \Omega ),
\end{equation*}%
\begin{equation*}
\Omega =\Omega (\mu )=\sqrt{1-\frac{4\mu ^{2}D^{2}}{c^{2}}}=\sqrt{1-\frac{%
4\mu ^{2}D^{2}}{c^{2}}}\,\mathbf{1}_{\mu \leq c/2D}+\mathrm{i}\sqrt{\frac{4\mu
^{2}D^{2}}{c^{2}}-1}\,\mathbf{1}_{\mu >c/2D},
\end{equation*}%
and $\mathbf{1}_{\left\{ {}\right\} }$ denotes the indicator function.

This case was considered in {\rm \cite{LV}}. The case $\alpha =1,$ $\alpha +\beta
=2 $ is also known, see {\rm \cite{BKLO1}, \cite{K}, \cite{KR},} since ${\rm{E}}%
_{1}(z)=e^{z}$ , and then
\begin{equation*}
{\rm{E}}_{1}(A_{\pm })=\exp \left\{ -\frac{c^{2}t}{2D}(1\pm \Omega )\right\}
,
\end{equation*}%
which gives%
\begin{eqnarray*}
H(\mu ,t) &=&\exp \left\{ -\frac{c^{2}t}{2D}\right\} \left\{\left[ \cosh \left( ct%
\sqrt{\frac{c^{2}}{4D^{2}}-\mu ^{2}}\right) +\frac{c}{2D\sqrt{\frac{c^{2}}{%
4D^{2}}-\mu ^{2}}}\sinh \left( ct\sqrt{\frac{c^{2}}{4D^{2}}-\mu ^{2}}\right) %
\right] \mathbf{1}_{\mu \leq c/2D} \right.\\
&+&\left.\left[ \cos \left( ct\sqrt{\mu ^{2}-\frac{c^{2}}{4D^{2}}}\right) +\frac{c%
}{2D\sqrt{\mu ^{2}-\frac{c^{2}}{4D^{2}}}}\sin \left( ct\sqrt{\mu ^{2}-\frac{%
c^{2}}{4D^{2}}}\right) \right] \mathbf{1}_{\mu >c/2D}\right\},
\end{eqnarray*}%
and the angular spectrum
\begin{equation*}
C_{l}(t,t^{\prime })=2\pi ^{2}\left[ \int_{0}^{\frac{c}{2D}}\frac{%
J_{l+1/2}^{2}(\mu )}{\mu }\bar{H}_{1}(\mu ,t)\tilde{H}_{1}(\mu ,t^{\prime
})G(d\mu )+\int_{\frac{c}{2D}}^{\infty }\frac{J_{l+1/2}^{2}(\mu )}{\mu }\bar{%
H}_{2}(\mu ,t)\tilde{H}_{2}(\mu ,t^{\prime })G(d\mu )\right] ,
\end{equation*}%
where
\begin{eqnarray*}
\bar{H}_{1}(\mu ,t) &=&\exp \left\{ -\frac{c^{2}t}{2D}\right\} \left[ \cosh
\left( ct\sqrt{\frac{c^{2}}{4D^{2}}-\mu ^{2}}\right) +\frac{c}{2D\sqrt{\frac{%
c^{2}}{4D^{2}}-\mu ^{2}}}\sinh \left( ct\sqrt{\frac{c^{2}}{4D^{2}}-\mu ^{2}}%
\right) \right] \mathbf{1}_{\mu \leq c/2D}, \\
\bar{H}_{2}(\mu ,t) &=&\exp \left\{ -\frac{c^{2}t}{2D}\right\} \left[ \cos
\left( ct\sqrt{\mu ^{2}-\frac{c^{2}}{4D^{2}}}\right) +\frac{c}{2D\sqrt{\mu
^{2}-\frac{c^{2}}{4D^{2}}}}\sin \left( ct\sqrt{\mu ^{2}-\frac{c^{2}}{4D^{2}}}%
\right) \right] \mathbf{1}_{\mu >c/2D}.
\end{eqnarray*}
\end{example}
\section{Numerical studies}\label{sec4}
This section  numerically investigates  the solution $T_H( x,t)$ and its spectral and covariance properties with respect to parameters $\alpha$ and $\beta.$  Two examples similar to those presented in~\cite{BKLO1} (where the case of $\alpha=\beta =1$ was considered)  are used to study the impact of $\alpha$ and $\beta.$

We use the approach developed in \cite{BKLO1}, see the justifications and detailed discussions there. However,  the numerical analysis in this paper requires more  sophisticated approximation methods
compared to \cite{BKLO1}. Namely,  a much more general case of two fractional derivatives of the orders $\alpha$ and $\beta$ is considered. Thus, in this general case,  the function $H(\mu,t)$ is not given via elementary functions and for computations one has to use a truncated version of the series (\ref{3.10}). Figure~\ref{fig1_0} uses the truncated double series with 80 $m$ and $n$ terms to illustrate the dependence of $H(\mu,t)$ on its parameters. The first subplot in Figure~\ref{fig1_0} shows $H(1,0.1)$ for $\alpha,\beta \in [0.5,1].$  The second subplot illustrates changes of $H(\mu,t)$ with respect to its arguments $\mu\in [1,20]$ and $t\in[0,1]$ for the fixed values of parameters $\alpha =0.8$ and $\beta=1.$  In the bottom row  $H(1,t)$ is shown as a functions of $t$ and $\alpha$ or $\beta$ for fixed $\beta=1$ and $\alpha=0.5$ respectively.
The plots illustrate that, as a general trend, the functions $H(\mu,t)$ exhibit a decreasing behaviour as their parameters $\alpha$ and $\beta$ decrease. Furthermore, there is a decline of $H(\mu,t)$ with increasing values of time $t$. Increasing the value of $\mu$ introduces a decaying oscillatory behaviour.
   \begin{figure}[!htb]
    \centering
    \subfigure[$H(1,0.1)$ as a function of $\alpha$ and $\beta$]{\includegraphics[trim = 3.5cm 1.5cm 1.cm 2.5cm,clip,width=0.48\linewidth,height=0.4\linewidth]{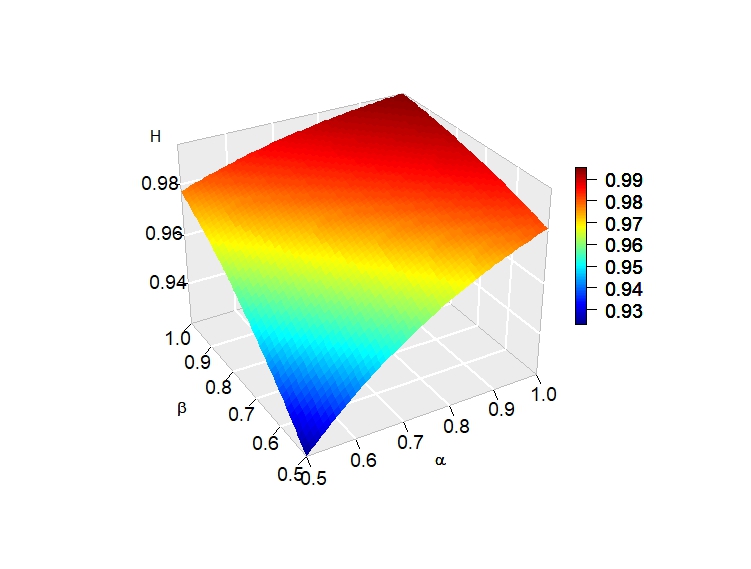}}
    \subfigure[$H(\mu,t)$ for fixed $\alpha=0.8$ and $\beta=1$]{	\includegraphics[trim =2.5cm 2cm 2.5cm 3cm,width=0.49\linewidth,height=0.38\linewidth]{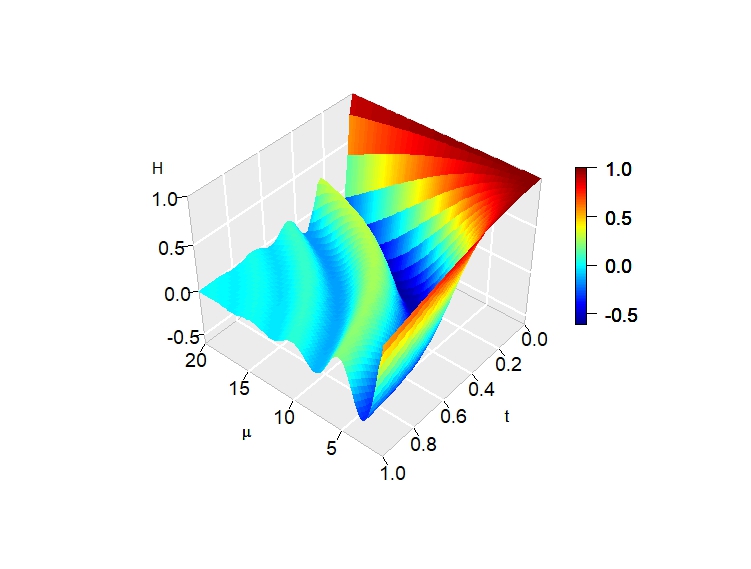}}\vspace{2cm}
    \subfigure[$H(1,t)$ as a function of $t$ and $\alpha$ when $\beta=1$]{	\includegraphics[trim = 1cm 2cm 2.5cm 3.5cm,width=0.48\linewidth,height=0.34\linewidth]{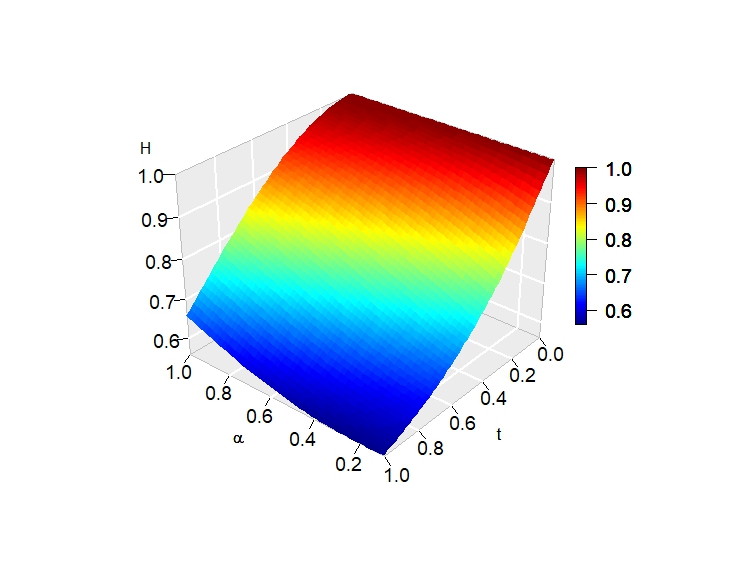}}
        \subfigure[$H(1,t)$ as a function of $t$ and $\beta$ when $\alpha=0.5$]{	\includegraphics[trim =1cm 2cm 2.5cm 3.5cm,width=0.49\linewidth,height=0.34\linewidth]{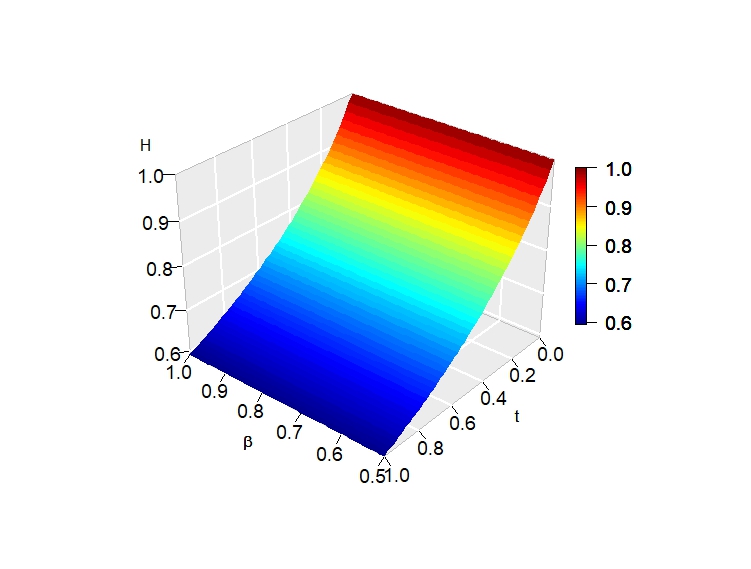}}
    \caption{Dependence of $H(\mu,t)$ on its parameters and arguments} \label{fig1_0}
\end{figure}

 Note that, similar  to \cite{BKLO1}, there are no explicit elementary functional relations between $C_l(t,t')$, $R(\cos \gamma, t,t')$ and $C_l(0,0)$, $R(\cos \gamma, 0,0)$ respectively. To compute spectral and covariance functions of $T_H(x,t)$ at time $t>0$ one has to approximate the integral representations (\ref{cov}), (\ref{alm}) and (\ref{3.9}) that use the spectral measure $G(\cdot)$ and stochastic measures $Z_{lm}(\cdot)$ of the initial random condition field $\eta( x)$.

In the following examples we illustrate the obtained results by using simulated data with the covariance function and oscillating angular spectrum that are similar to those observed for the CMB data, see plots and discussions in ~\cite{BKLO1}. In the  examples we assume that the measure $G(\cdot)$ is discrete with a finite support set. Absolutely continuous spectral measures  can be approximated by selecting a sufficiently a large dense support set.

Let us denote by $\{\mu_i,\ i=1,\dots,I\}$ the support of $G(\cdot)$  and its values by
$$\sigma_i^2=G(\mu_i)=\mathbf  E\ Z_{lm}^2(\mu_i),\ i=1,\dots,I,$$
where  $Z_{lm}(\cdot)$ are real-valued random variables.
Under the assumptions that the random field $\eta(x)$ is centered Gaussian, the random variables  $Z_{lm}(\mu_i)\sim N(0, \sigma_i^2)$ and independent for different $l$, $m$ and $i$.

Then, formulae (\ref{cov}), (\ref{alm}) and (\ref{3.9}) take the following discrete forms
\begin{equation}\label{Disc}
    R(\cos \gamma, t,t')=\sum_{i=1}^{I} \frac{\sin(2\mu_i\sin(\frac{\gamma}{2}))}{2\mu_i\sin(\frac{\gamma}{2})}\tilde{H}(\mu_i,t)\tilde{H}(\mu_i,t')\sigma_i^2,
\end{equation}
$$a_{lm}(t)=\pi \sqrt{2} \sum_{i=1}^{I} \frac{J_{l+\frac{1}{2}}(\mu_i)}{\sqrt{\mu_i}}\tilde{H}(\mu_i,t)Z_{lm}(\mu_i),$$
\begin{align}\label{simCl}
    C_l(t,t')=2\pi^2 \sum_{i=1}^{I} \frac{J_{l+\frac{1}{2}}^2(\mu_i)}{\mu_i}\tilde{H}(\mu_i,t)\tilde{H}(\mu_i,t')\sigma_i^2.
\end{align}

\begin{example}
    This example illustrates changes over time of the covariance function $R(\cos{\gamma},t,t)$ and the power spectrum $C_{l}(t,t).$  The  parameters $c=1$ and $D=1$ in equation {\rm (\ref{2.2})} were selected.  To produce plots and computations we used the corresponding discrete equations {\rm (\ref{Disc})} and {\rm (\ref{simCl})} with the spectrum support  $\mu_i=1+4(i-1)$ and values $\sigma_{i}={100}/{i},$  $i\in\{1,2,\dots,10\},$ i.e. $I=10.$  To compute approximate values of the functions $H(\mu,t)$ we used truncated double series in {\rm (\ref{3.10})} with 80 terms for both $n$ and $m.$ The empirical studies show that increasing the number of terms does not change the plots.

   For the cases of $\alpha=\beta=1$ and $\alpha=0.8,$ $\beta=1,$ Figure~{\rm \ref{fig:1_1}} show the covariance $R(\cos\gamma, t,t)$ at the time lags $t=0,\ t=0.1$ and $t=0.5$ as functions of the angular distance $\gamma.$ As expected, the first subplot in Figure~{\rm\ref{fig:1_1}} coincides with the particular case considered in Example~{\rm\ref{ex2}} and the corresponding plot in {\rm\cite[Figure 2a]{BKLO1}. } The covariance plots in Figure~{\rm\ref{fig:1_1}} and results for the same $t=0.1$ but different values $\alpha$ and $\beta$ in Figure~{\rm\ref{fig:1_2}} suggest that the variance of the field and dependencies at short angular distances decrease with decreasing values of $\alpha$  and $\beta.$ At large angular distances dependencies are smaller and therefore their decrease has a smaller magnitude.

   \begin{figure}[!htb]
       \centering
       \subfigure[The case of $\alpha=\beta=1$]{\includegraphics[trim = 0cm 0cm 0cm 2.0cm,clip,width=0.49\linewidth,height=0.4\linewidth]{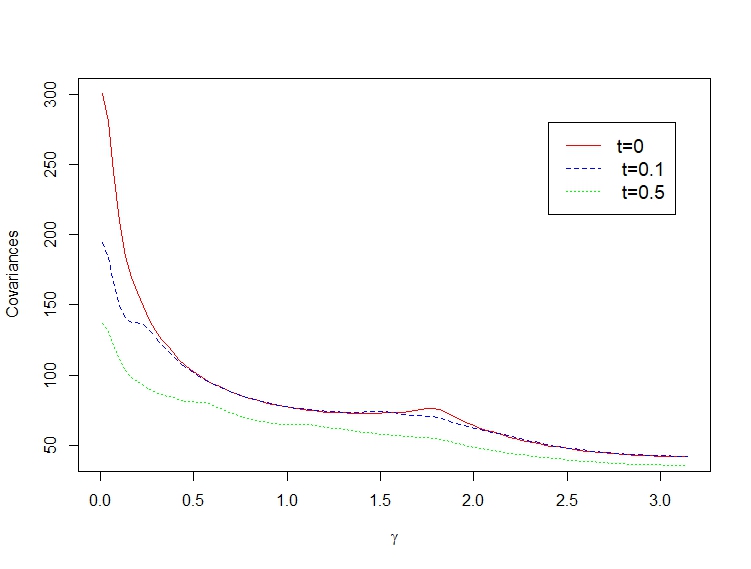}}
       \subfigure[The case of $\alpha=0.8$ and $\beta=1$ ]{\label{fig1_1b}	\includegraphics[trim = 0cm 0cm 0cm 2.0cm,width=0.49\linewidth,height=0.4\linewidth]{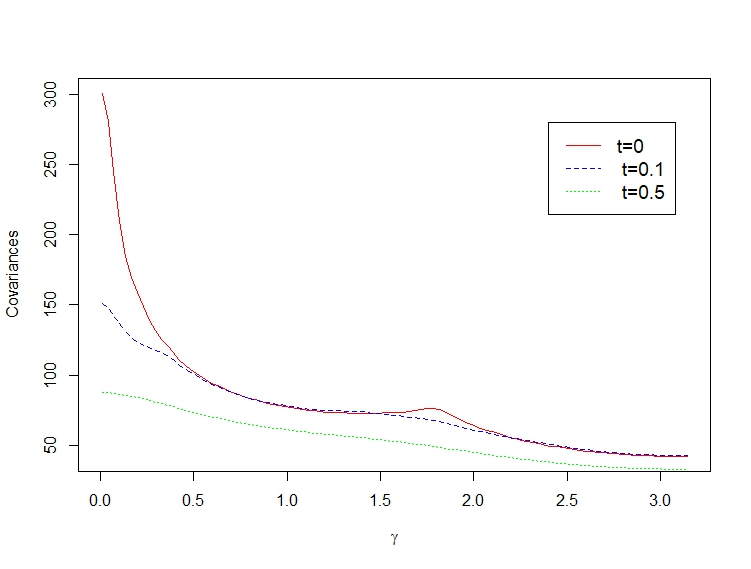}}
       \caption{$R(\cos \gamma,t,t)$ at the time lags $t=0,\ 0.1$, and $0.5$  for $c=D=1$} \label{fig:1_1}
   \end{figure}

      \begin{figure}[!htb]
       \centering
       \subfigure[The case of $\beta=1$]{\includegraphics[trim = 0cm 0cm 0cm 1.0cm,clip,width=0.49\linewidth,height=0.4\linewidth]{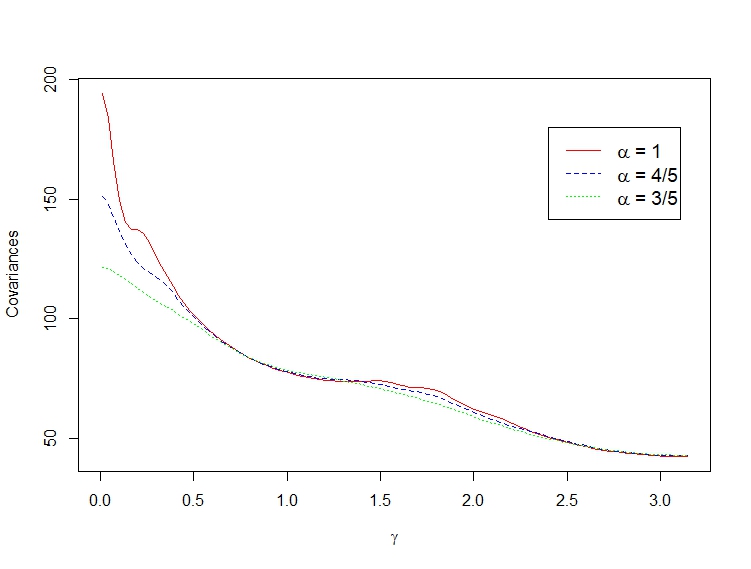}}
       \subfigure[The case of $\alpha=1$ ]{	\includegraphics[trim = 0cm 0cm 0cm 1.0cm,width=0.49\linewidth,height=0.4\linewidth]{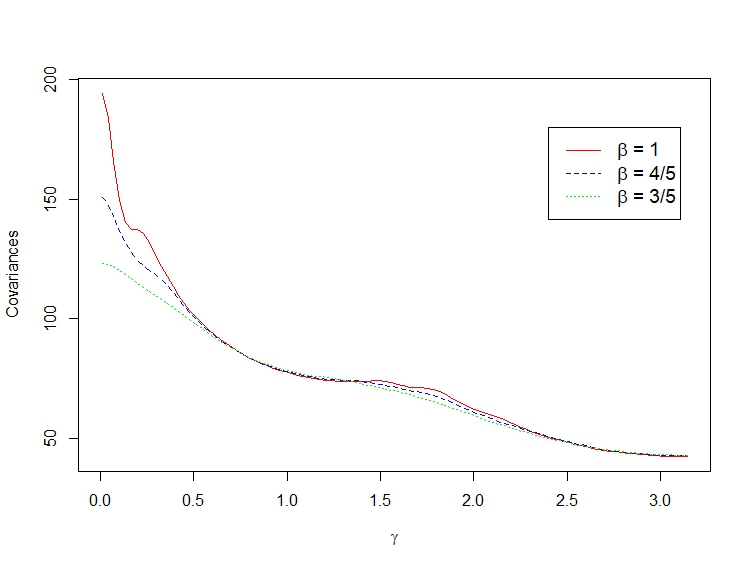}}
       \caption{$R(\cos \gamma,t,t)$ at the time lag $t=0.1$  for $c=D=1$} \label{fig:1_2}
   \end{figure}

 To understand effects of the orders of fractional derivatives  and the angular distance on the covariance function, we provide 3D-plots in Figure~{\rm \ref{fig1_3}}. They show the covariance as functions of the angular distances~$\gamma$ and the parameters $\alpha$ and $\beta.$  The fixed numeric values $\beta=1$ and $\alpha=0.8$ of another fractional order were used for each of the subplots respectively. The values $c=D=1$ and $t=0.1$ were selected.  The plots in Figure~{\rm\ref{fig1_3}} were normalized by the maximum value of $R(\cos\gamma,0.1,0.1).$  The plots suggest that the covariance decays when the angular distance increases and the fractional orders decrease. Moreover, it decreases faster with respect to decreasing  $\alpha$ than $\beta.$
    \begin{figure}[!htb]
        \centering
        \subfigure[$R(\cos \gamma,0.1,0.1)$ as a function of $\gamma$ and $\alpha$ for $\beta=1$]{\includegraphics[trim = 2.0cm 2cm 2.9cm 2.2cm,clip,width=0.50\linewidth,height=0.41\linewidth]{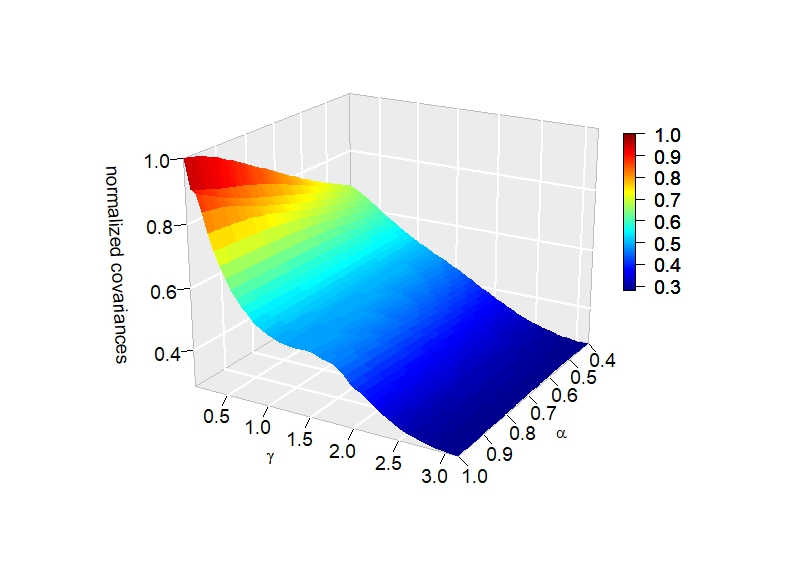}}\hspace{0.1 cm}
        \subfigure[$R(\cos \gamma,0.1,0.1)$ as a function of $\gamma$ and $\beta$ for $\alpha=0.8$]{\label{fig:b1}\includegraphics[trim = 2.0cm 2cm 2.9cm 2.2cm,clip,width=0.48\linewidth,height=0.41\linewidth]{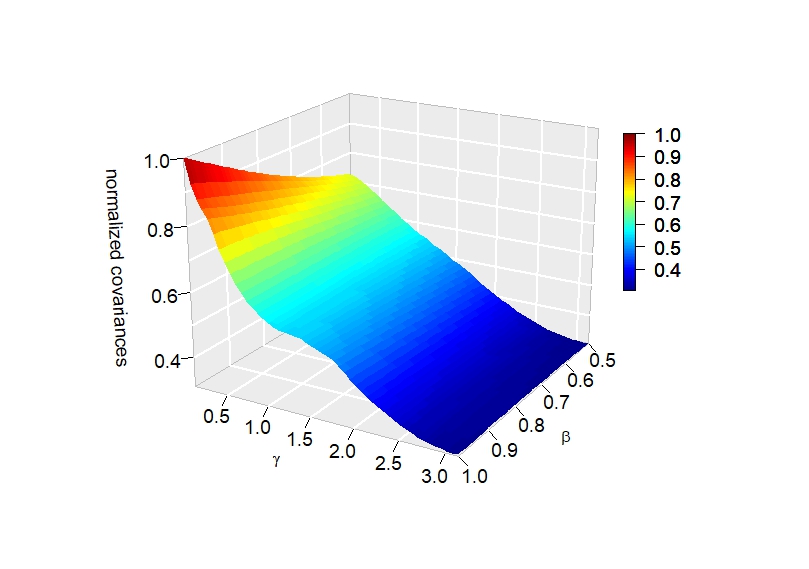}}
        \caption{$R(\cos \gamma,t,t)$ at the time lag $t=0.1$  for $c=D=1$ }\label{fig1_3}
    \end{figure}

Figure {\rm \ref{fig1_4}} displays the power spectrum $C_{l}(t,t)$ as a function of time $t$ and the fractional orders $\alpha$ and $\beta.$ To produce this figure we used the time interval $t\in[0,0.5]$ and two sets of indices $l=2, 5, 10, 20$ and $l=3, 6, 11, 21.$  Two combinations of orders of fractional derivatives,  $\alpha =0.8,\ \beta=1$ and $\alpha =1,\ \beta=0.8,$  were used. The plots demonstrate that the power spectrum magnitudes  decay very quickly  regardless of the values of $\alpha$ and $\beta$ when $l$ increases.
    \begin{figure}[!htb]
    \centering
    \subfigure[$C_{l}(t,t)$ as a function of $t$  for  $\alpha =0.8$ and $\beta=1$]{\label{fig:a1}\includegraphics[trim = 0.0cm 0cm 0.9cm 2.0cm,clip,width=0.48\linewidth,height=0.41\linewidth]{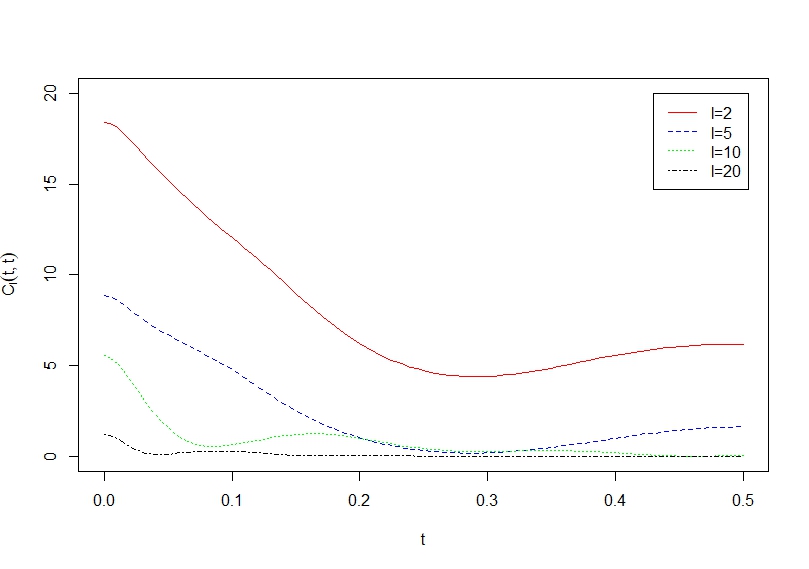}}\hspace{0.1 cm}
    \subfigure[$C_{l}(t,t)$ as a function of $t$  for  $\alpha=1$ and  $\beta=0.8$ ]{\label{fig:b1}\includegraphics[trim = 0.0cm 0cm 0.9cm 2.0cm,clip,width=0.48\linewidth,height=0.41\linewidth]{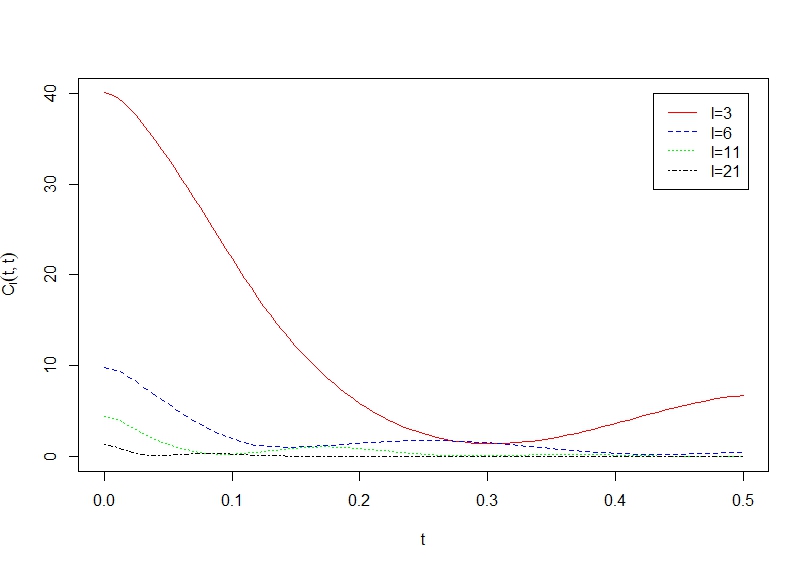}}
    \caption{$C_{l}(t,t)$ on the interval $[0,0.5]$ for $c=D=1$ }\label{fig1_4}
\end{figure}
\end{example}
\begin{example}
In this numeric example we consider SFHDRF for the parameters $c=1$ and $D=2.$
The initial condition random field $\eta(\mathbf  x)$ with low and high frequency components is studied.  Namely, the support of its discrete spectrum belongs to the intervals $[0,20]$ and $[80,90].$ To simulate realisations of random fields at the same small scale as real CMB values we use the values $\sigma_i^2=0.00003$ and $0.0001$ for low and high frequency components respectively.

We use the CMB tools and colour palettes from the R package rcosmo~{\rm\cite{fryer2018rcosmo}} and the Python package healpy  for fast computations of Healpix images from the corresponding Laplace series.
  As approximations of spherical fields we use  Laplace series with the first $100$ coefficients $C_l$ obtained by applying~{\rm (\ref{simCl})} to the above discrete spectrum. For the purpose of comparison, the estimated angular spectrum of the actual CMB data is presented in the first subplot of  Figure~{\rm \ref{fig2_1}}. The second subplot gives the angular power spectrum coefficients $C_l(t,t)$  at the specific time instances $t=0, 0.05$ and $0.1$ for different values of $\alpha$ and $\beta.$   The example demonstrates  that simple discrete spectral distribution can effectively generate a pattern that resembles the genuine angular power spectrum. The magnitude of  $C_l(t,t)$  in the second subplot of Figure~{\rm \ref{fig2_1}} exhibits a decreasing trend as time $t$ progresses and as the values of $\alpha$ and $\beta$ decrease.
       \begin{figure}[!htb]
       \centering
       \subfigure[Angular power spectrum of CMB data]{\includegraphics[trim = 1.2cm 0cm 0.9cm 2.0cm,clip,width=0.46\linewidth,height=0.41\linewidth]{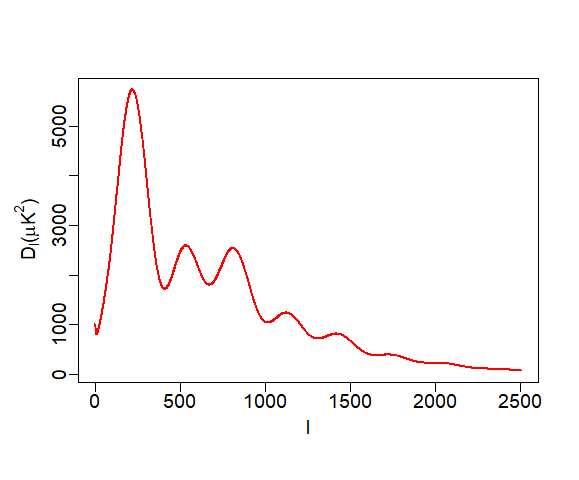}}\hspace{0.1 cm}
       \subfigure[$C_{l}(t,t)$ for $c=1$ and $D=2.$]{\includegraphics[trim = 0.0cm 0cm 0.9cm 2.0cm,clip,width=0.5\linewidth,height=0.41\linewidth]{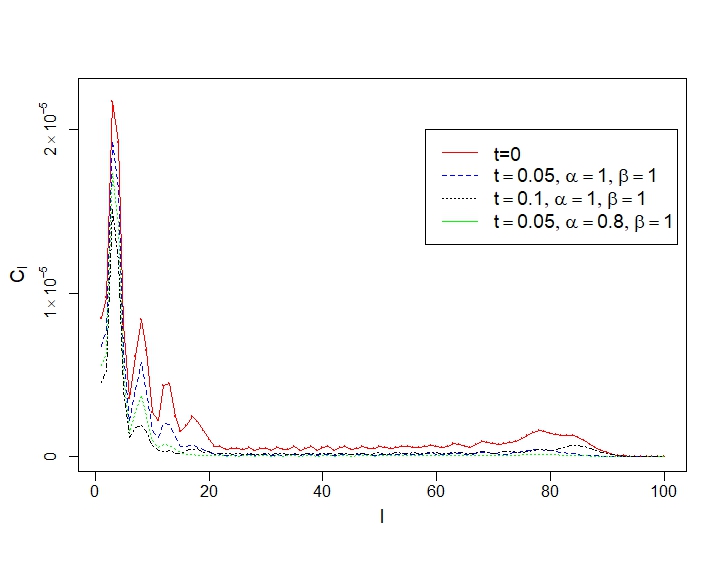}}
       \caption{Angular power spectrum}\label{fig2_1}
   \end{figure}

  The first map  in Figure~{\rm \ref{fig2_2}} shows the initial condition random field  i.e. $T_{H}(x,0)$ computed for $t=0.$
Then,    maps are presented for $t=0.05$ and various combinations of parameters $\alpha$ and $\beta.$ The obtained plots suggest that with increasing $t$ the field is getting smother, which is expected from general physical and cosmological arguments. Also, it is evident from the maps that smaller values of the orders of fractional derivatives make temporal fields smoother.
    \begin{figure}[!htb]
    \centering
    \subfigure[The case of $t=0,$   $\alpha =1$ and $\beta=1$]{\includegraphics[trim = 0.5cm 2cm 1.5cm 3.0cm,clip,width=0.48\linewidth,height=0.41\linewidth]{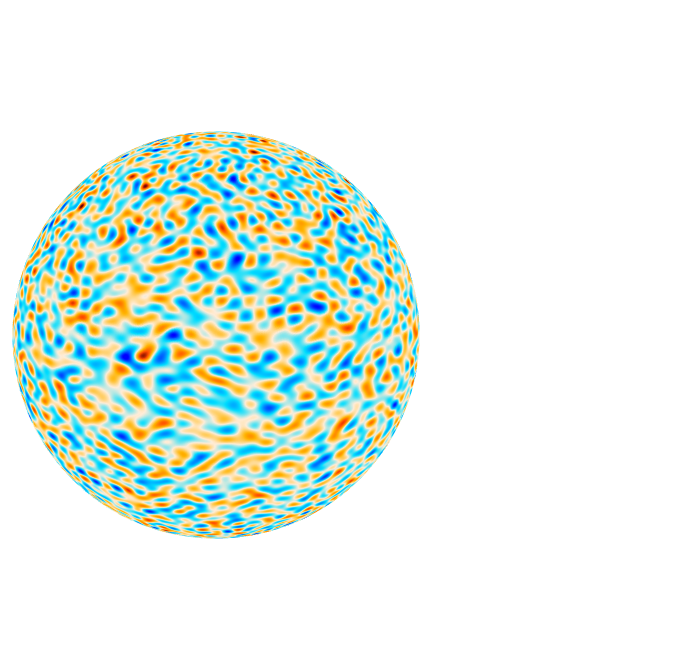}} \hspace{-3 cm}
    \subfigure{\includegraphics[trim = 2.0cm 2cm 0.9cm 2.0cm,clip,width=0.2\linewidth,height=0.41\linewidth]{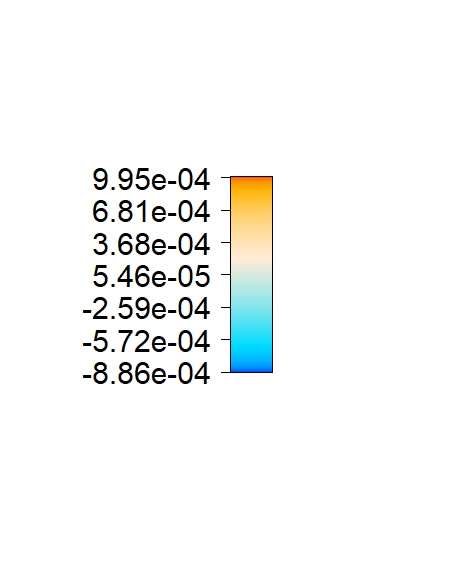}}
    \hspace{-1 cm}
    \subfigure[The case of $t=0.05,$   $\alpha =1$ and $\beta=1$]{\includegraphics[trim = 0.5cm 2cm 1.5cm 3.0cm,clip,width=0.48\linewidth,height=0.41\linewidth]{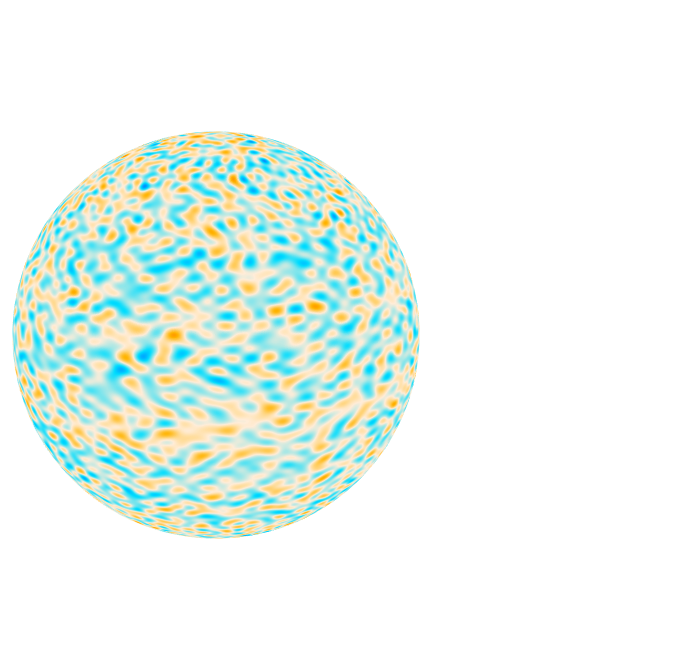}} \hspace{-3 cm}
\subfigure{\includegraphics[trim = 2.0cm 2cm 0.9cm 2.0cm,clip,width=0.2\linewidth,height=0.41\linewidth]{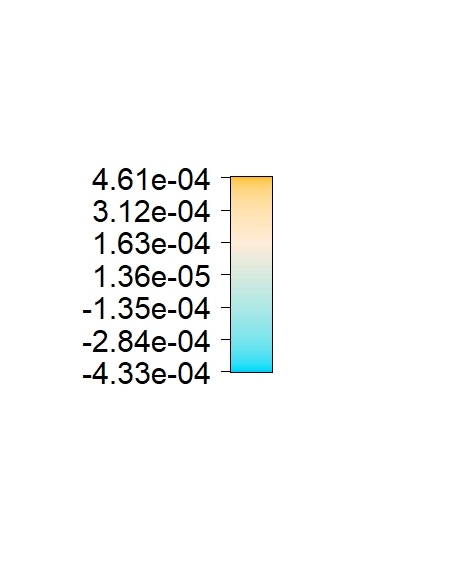}}
    \subfigure[The case of $t=0.05,$   $\alpha =0.8$ and $\beta=1$]{\includegraphics[trim = 0.5cm 2cm 1.5cm 3.0cm,clip,width=0.48\linewidth,height=0.41\linewidth]{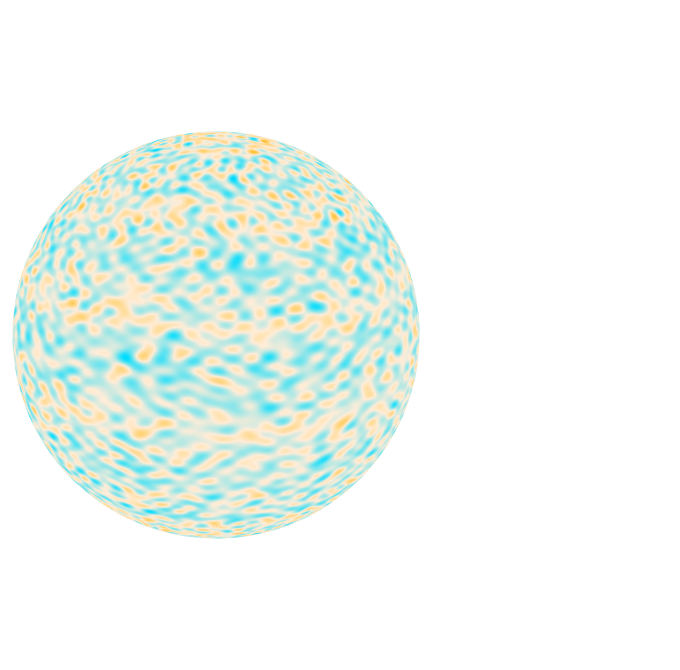}} \hspace{-3 cm}
\subfigure{\includegraphics[trim = 2.0cm 2cm 0.9cm 2.0cm,clip,width=0.2\linewidth,height=0.41\linewidth]{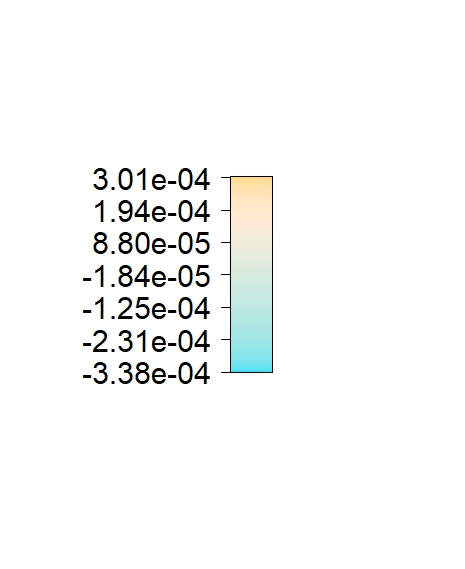}}
\hspace{-1 cm}
\subfigure[The case of $t=0.05,$   $\alpha =1$ and $\beta=0.8$]{\includegraphics[trim = 0.5cm 2cm 1.5cm 3.0cm,clip,width=0.48\linewidth,height=0.41\linewidth]{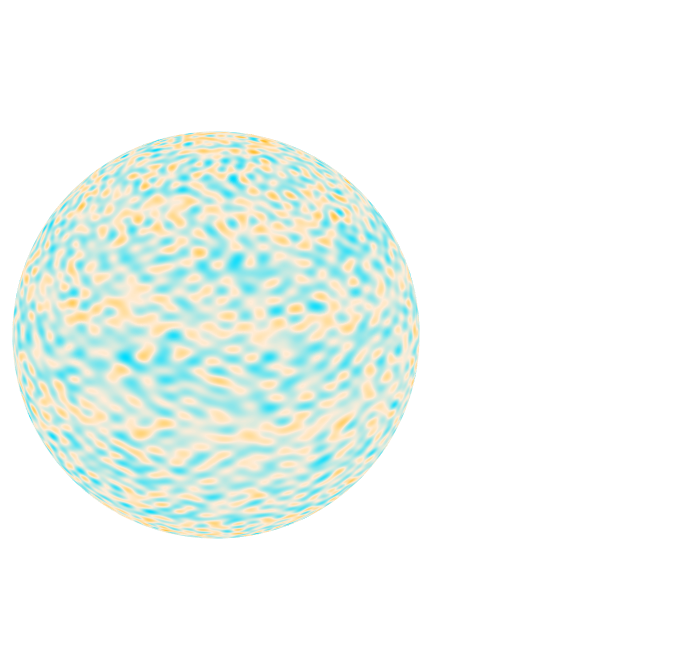}} \hspace{-3 cm}
\subfigure{\includegraphics[trim = 2.0cm 2cm 0.9cm 2.0cm,clip,width=0.2\linewidth,height=0.41\linewidth]{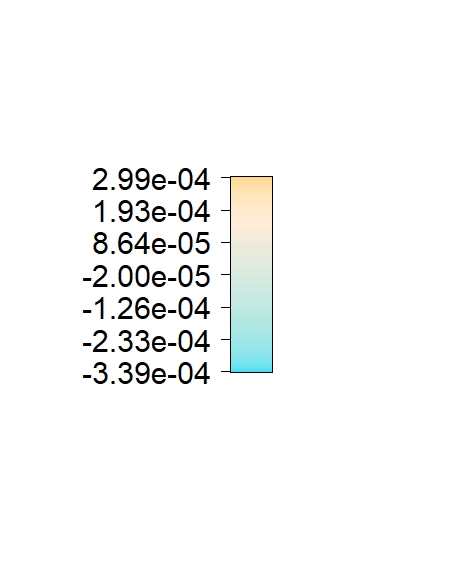}}
    \caption{SFHDRF $T_{H}(x,t)$  for $c=1$ and $D=2.$ }\label{fig2_2}
\end{figure}
\end{example}

\section*{Acknowledgments}

N.~Leonenko and J.~Vaz were supported by  FAPESP grant 22/09201-8 (Brazil).  N.Leonenko's and A.Olenko's research was partially supported under the Australian Research Council's Discovery Projects funding scheme (project number  DP220101680).  N.~Leonenko would like to thank for support  by LMS grant 42997 (UK), the programme “Fractional Differential Equation”s and the programme “Uncertainly Quantification and Modelling of Material”  in Isaac Newton Institute for Mathematical Sciences, Cambridge.
\bigskip


\begin{thebibliography}{99}
    \bibitem{Adam} Adam, R., et al.: Planck 2015 results. I. (2016) Overview of
    products and scientific results. Astron. Astrophys. \textbf{594}, A16.

    \bibitem{Beghin} Beghin, L., and Caputo, M. (2020) Commutative and associative properties of
    the Caputo fractional derivative and its generalizing convolution operator,
    Commun Nonlinear Sci Numer Simulat \textbf{89}, 105338.

  \bibitem{Bho}   Bhootna, N., Dhull, M.S.,  Kumar, A., Leonenko, N. (2023)
    Humbert generalized fractional differenced ARMA processes,
     Commun Nonlinear Sci Numer Simulat,  \textbf{125,} 107412.

    \bibitem{Bouchaud} Bouchaud, J.-P., and Georges, A. (1990) Anomalous diffusion
    in disordered media: Statistical mechanisms, models and physical
    applications. Phys. Rep. \textbf{195}, 127-293.

    \bibitem{BKLO} Broadbrige, P., Kolesnik, A., Leonenko, N., and Olenko, A.
    (2019) Random spherical hyperbolic diffusion, Journal of Statistical
    Physics, \textbf{177} (5), 889-916.

    \bibitem{BKLO1} Broadbrige, P., Kolesnik, A., Leonenko, N., and Olenko, A.
    (2020) Spherically restricted random hyperbolic diffusion, Entropy, \textbf{22} (2)
    Paper N 217, 31pp.

    \bibitem{BNO} Broadbrige, P., Nanayakkara, R., and Olenko, A. (2022). On multifractionality of spherical random fields with cosmological applications, The ANZIAM Journal, \textbf{64} (2), 90-118.

    \bibitem{Cabella} Cabella, P., and Marinucci, D. (2009) Statistical challenges
    in the analysis of cosmic microwave background radiation. Ann. Appl. Stat.
    \textbf{3}, 61-95.

    \bibitem{CapelasCNSNS} Capelas de Oliveira, E., Jarosz, S. and Vaz, J. (2019)
    Fractional calculus via Laplace transform and its application in relaxation
    processes, Commun Nonlinear Sci Numer Simulat \textbf{69}, 58-72.

    \bibitem{Cattaneo} Cattaneo, C.R. (1958) Sur une forme de l'\'{e}quation de
    la chaleur \'{e}liminant le paradoxe d'une propagation instantan\'{e}e.
    Comptes Rendus \textbf{247}, 431-433.

    \bibitem{CG} Christakos, G. (1992) Random Field Models in Earth Sciences.
    Academic Press, Inc., San Diego.

    \bibitem{CG1} Christakos, G. (2017) Spatiotemporal Random Fields: Theory and
    Applications.  Elsevier, Amsterdam.

    \bibitem{Compte} Compte, A., and Metzler, R. (1997) The generalized Cattaneo
    equation for the description of anomalous transport processes. J. Phys. A:
    Math. Gen. \textbf{30}, 7277-7289.

    \bibitem{Negrete} Del-Castillo-Negrete, D. (2008) Fractional diffusion
    models of anomalous transport, in Anomalous Transport: Foundations and
    Applications, Klages, R., Radons, G., \mbox{Sokolov, I. M.} (eds.), Wiley-VCH
    Verlag GmbH, Weinheim.

    \bibitem{Dodelson} Dodelson, S. (2003) {Modern Cosmology}. Academic Press,
    New York.

    \bibitem{DLO} D'Ovidio, M., Leonenko, N., and Orsingher, E. (2016) Fractional
    spherical random fields. Statist. Probab. Lett., \textbf{116}, 146--156.

    %\bibitem{DN} D'Ovidio, M., Nane, E.\ (2016) Fractional Cauchy problems on
    %compact manifolds. Stoch. Anal. Appl.,\textbf{34}, no. 2, 232--257.

    %\bibitem{DOS} D'Ovidio, M., Orsingher, E., Sakhno, L. (2016) Spectral
    %densities related to some fractional stochastic differential equations.
    %Electron. Commun. Probab., \textbf{21}, Paper No. 18, 15 pp.

    \bibitem{DOT} D'Ovidio, M., Orsingher, E., and Toaldo, B., (2014)
    Time-changed processes governed by space-time fractional telegraph
    equations, Stoch. Anal. Appl., \textbf{32}, 1009-1045.

    \bibitem{V1} Figueiredo, C.R., Capelas de Oliveira,  E.,  and Vaz, J. Jr. (2012)
    On the generalized Mittag-Leffler function and its application in a
    fractional telegraph equation. Math. Phys. Anal. Geom. \textbf{15} (1),
    1-16.

    \bibitem{Fisher} Fisher, N.I., Lewis, T., and Embleton, B.J.J. (1993) Statiscal
    Analysis of Spherical Data. Cambridge University Press, Cambridge.

    \bibitem{fryer2019rcosmo} Fryer, D., Li, M.,  and Olenko, A. (2020) rcosmo: R Package for Analysis of Spherical, HEALPix and Cosmological Data. The R Journal. \textbf{12} (1), 206-225.

    \bibitem{fryer2018rcosmo} Fryer, D.; Olenko, A.; Li, M.;  and Wang, Yu. rcosmo: Cosmic Microwave Background Data
    Analysis. R package version 1.1.3.
    \url{https://CRAN.R-project.org/package=rcosmo}, {\bf 2021}.

    \bibitem{GLM} Gorenflo, R., Kilbas, A.A., Mainardi, F. and Rogosin, S.V.,
    (2014) Mittag-Leffler Functions, Related Topics and Applications, Springer,
    Heidelberg.

    \bibitem{IL} Ivanov, A.V. and Leonenko, N.N, (1989) Statistical Analysis of
    Random Fields, Kluwer Academic Publisher, Dordrecht.

    \bibitem{K} Kolesnik, A.D., (2021) Markov Random Flights, CRC Press, Boca
    Raton.

    \bibitem{KR} Kolesnik, A.D. and Ratanov, N., (2022) Telegraph Process and
    Option Pricing, Springer, Berlin.

    \bibitem{L} Leonenko, N.N., (1999) Limit Theorems for Random Fields with
    Singular Spectrum. Kluwer Academic Pablisher, Dordrecht.

    \bibitem{LV} Leonenko, N.N. and Vaz, J., (2020) Spectral analysis of
    factional hyperbolic diffusion equations with random data, Journal of
    Statistical Physics, \textbf{179}, 155-175.

    \bibitem{LNO}  Leonenko, N., Nanayakkara, R. and Olenko, A. (2021) Analysis of spherical monofractal and multifractal random fields, Stochastic Environmental Research and Risk Assessment, \textbf{35}, 681–701.

    \bibitem{LM} Leonenko, N.N., Malyarenko, A. and Olenko, A., (2022) On
    spectral theory of random fields in the ball, Theory Probab. and Math.
    Statist, \textbf{107}, 61-76.

    \bibitem{Mainardi2010} Mainardi, F. (2010) Fractional Calculus and Waves in
    Linear Viscoelasticity. Imperial College Press, London.

    \bibitem{M19} Malyarenko, A., and Ostoja-Starzewski, M. (2019) Tensor-valued
    Random Fields for Continuum Physics. Cambridge University Press, Cambridge.

    \bibitem{MP} Marinucci, D. and Peccati, G., (2011) Random Fields on the
    Sphere: Representation, Limit Theorems and Cosmological Applications,
    Cambridge University Press, Cambridge.

    \bibitem{MS1} Meerschaert, M. M., and Sikorskii, A. (2019) Stochastic Models for
    Fractional Calculus. Walter de
    Gruyter \& Co., Berlin.

    %\bibitem{Shafer} Schafer, C.M. (2015) A Framework for Statistical Inference
    %in Astrophysics. Annu. Rev. Stat. Appl. \textbf{2}, 141-162.

    \bibitem{Oh} Oh, H.-S., and Li, T.-H. (2004) Estimation of global temperature
    fields from scattered observations by a spherical-wavelet-based spatially
    adaptive method. J. R. Statist. Soc. B, \textbf{66}, 221-238.

    \bibitem{OB} Orsingher, E. and Beghin, L., (2004), Time-fractional telegraph
    equations and telegraph processes with brownian time, Probab. Theory Relat.
    Fields, \textbf{128} (1), 141-160.

    \bibitem{P} Podlubny, I., (1999) Fractional Differential Equations, Academic
    Precc, New York.

    \bibitem{Por} Porcu, E., Bevilacqua, M., Schaback, R., and Oates, C.J. (2023) The Mat\'ern model: A journey through statistics, numerical analysis and machine learning, arXiv: 2303.02759.

    \bibitem{Pov}  Povstenko, Y. and Ostoja-Starzewski, M. (2022)
    Fractional telegraph equation under moving time-harmonic impact,
    International Journal of Heat and Mass Transfer,
    \textbf{182,} 121958.

    \bibitem{SGO} Saxena, R.K., Garra, R. and Orsingher, E., (2015) Analytical
    solution of space-time fractional telegraph-type equations involving Hilfer
    and Hadamar derivatives, Integral Transform and Special Functions, \textbf{27} (1),
    30-42.

    \bibitem{Weinberg} Weinberg, S. (2008) {Cosmology}. Oxford University Press,
    Oxford.
\end{thebibliography}
\end{document}